\documentclass[11pt,a4paper]{article}

\usepackage{newpxtext} 
\usepackage{newpxmath} 

\usepackage{geometry}
\usepackage[utf8]{inputenc}
\usepackage[T1]{fontenc}
\usepackage{amsmath}

\usepackage{amsthm}
\usepackage{graphicx}
\usepackage{float}
\usepackage{subcaption}
\usepackage{booktabs}
\usepackage{url}
\usepackage{algorithm,algorithmic}
\usepackage[legacycolonsymbols]{mathtools}

\usepackage[most]{tcolorbox}
\newtcolorbox{mybox}[1]{
  colback=white, colframe=black, coltitle=black,
  fonttitle=\bfseries, attach title to upper, after title={:\ },
  sharp corners, boxrule=0.5pt, left=10pt, right=10pt, top=10pt, bottom=10pt,
  title=#1
}

\usepackage{xcolor}
\definecolor{arxivblue}{rgb}{0.0, 0.0, 0.5} 
\usepackage[colorlinks=true, linkcolor=arxivblue, citecolor=arxivblue, urlcolor=arxivblue]{hyperref}

\newtheorem{theorem}{Theorem}
\newtheorem{lemma}{Lemma}

\title{\textbf{Extension of the safeguarding stepsize interval in Adaptive Gradient Descent}}

\author{
  Saneatsu Kagawa\thanks{Graduate School of Informatics, Kyoto University, Japan, \texttt{kagawa.saneatsu.67k@st.kyoto-u.ac.jp}}
  \and
  Nobuo Yamashita\thanks{Graduate School of Informatics, Kyoto University, Japan, \texttt{nobuo@i.kyoto-u.ac.jp}}
}
\date{June 3, 2026}

\begin{document}
\maketitle

\begin{abstract}
In this paper, inspired by the idea of Adaptive Gradient Descent (AdGD) by Malitsky and Mishchenko, we propose a method that adaptively provides a stepsize interval, which represents the condition for stepsizes ensuring global convergence. 
By projecting the Barzilai Borwein stepsize onto this interval, we can ensure global convergence and expect further acceleration of convergence. 
Furthermore, we propose enlarging the stepsize interval obtained by AdGD and show that global convergence can be guaranteed within this enlarged stepsize interval.
This extension of the interval enables us to adopt the full Barzilai-Borwein (BB) stepsize more frequently.
We report the results of numerical experiments on the stepsize that combines the proposed interval with the BB stepsize.
\end{abstract}

\section{Introduction}
An unconstrained optimization problem is the problem of minimizing an objective function without any constraints and is widely used in various fields such as machine learning and data analysis.
Among them, a problem whose objective function is convex is called an unconstrained convex optimization problem.

When considering solving an unconstrained convex optimization problem, depending on the objective function, the optimal solution cannot always be obtained analytically.
Therefore, in practice, numerical solution methods, called iterative methods, are used.
In these methods, a sequence of points $\{x^k\}$ is generated using a search direction $d_k$ and a stepsize $t_k$ using the following update formula, and its convergence point is taken as the optimal solution:
\[
x^{k+1}=x^k+t_kd_k.
\]
Various iterative methods exist depending on how the search direction is determined.
Among them, a representative method is gradient descent \cite{cauthy}, where the search direction is set in the opposite direction of the gradient vector of the objective function.

In gradient descent, the speed of convergence to the optimal solution depends heavily on how the stepsize is determined.
Therefore, various stepsize calculation methods have been devised with the aim of improving the convergence rate.

First, the simplest one is the fixed stepsize.
This method fixes the stepsize, and since there is no need to compute the stepsize at each step, the computational cost per step can be reduced.
However, when the stepsize is fixed, global convergence to the optimal solution cannot be guaranteed unless the objective function is globally smooth \cite{fixedstepsize1}, so its usefulness is limited to specific functions.
Furthermore, even if it is globally L-smooth, several practical limitations remain.
Specifically, computing the value of stepsize requires explicit knowledge of the Lipschitz constant L, which is often unavailable in practice.
Moreover, if L is too large, the stepsize required to ensure convergence becomes extremely small, which significantly degrades the convergence rate.

As another stepsize determination method is line search.
In contrast to fixed stepsize, line search computes the stepsize at each step.
At this time, it is necessary to compute the stepsize to satisfy specific conditions such as the Armijo condition \cite{Armijo} and the Wolfe condition \cite{wolf}.
This method offers significant advantages; it eliminates the need for the Lipschitz constant L to compute the stepsize, while preventing the stepsize from becoming excessively large or small.
However, on the other hand, it is necessary to evaluate the function value multiple times at each step, so it suffers from the problem that the computational cost becomes very large.

Furthermore, there are other methods such as the Barzilai-Borwein method (BB method) \cite{BB}.
The BB method has a relatively fast convergence rate.
However, its convergence cannot be guaranteed unless the objective function is a strictly convex quadratic function.
Thus, in any case, there are some problems, such as the strictness of the assumptions on the objective function or the largeness of computational cost.

Adaptive Gradient Descent \cite{malitsky2019,malitsky2024} overcomes these drawbacks.
Unlike the fixed stepsize or the Barzilai-Borwein method, Adaptive Gradient Descent has been shown to converge to the optimal solution assuming only that the objective function is locally L-smooth.
Moreover, unlike line search, it does not require the function value to compute the stepsize, and since it can be obtained in a single calculation, the computational cost is small.
However, as a drawback, it cannot be combined with heuristic methods that have fast convergence rates in numerical experiments, such as the Barzilai-Borwein method.

Therefore, the purpose of this report is to overcome the drawbacks of Adaptive Gradient Descent and further accelerate the convergence rate.
Specifically, we find a stepsize interval in which convergence to the optimal solution is guaranteed in Adaptive Gradient Descent, and consider the median of three values: its upper bound, its lower bound, and a stepsize obtained by a method with fast convergence such as the Barzilai-Borwein method, as a new stepsize (a similar method is proposed in \cite{Andreas}).
This ensures convergence to the optimal solution while accelerating the convergence rate.
Furthermore, in this report, we extend the upper bound of such a stepsize interval.
This makes it easier to adopt the stepsize determined by the Barzilai-Borwein method or others, and faster convergence is expected.

The structure of this report is as follows.
In section 2, we present the details of Adaptive Gradient Descent.
In section 3, we present the stepsize interval that ensures global convergence to the optimal solution by using Adaptive Gradient Descent, and further extend its upper bound.
In section 4, we propose a method that combines Adaptive Gradient Descent with other stepsize calculation methods such as the Barzilai-Borwein method based on the extended stepsize interval.
In section 5, we compare the performance of the proposed method and the original Adaptive Gradient Descent through numerical experiments.
Finally, in section 6, we provide a summary of this study and future work.

\section{Adaptive Gradient Descent}
In this section, we present the details of Adaptive Gradient Descent (AdGD) \cite{malitsky2019,malitsky2024}.

Consider the following unconstrained convex optimization problem:
\begin{equation}\label{opt1}
  \min_{x\in\mathbb{R}^d}{f(x)}
\end{equation}
where $f\colon\mathbb{R}^d\rightarrow\mathbb{R}$ is a differentiable convex function.

We consider solving this problem \eqref{opt1} using gradient descent with the stepsize $t_k$.
At this time, the sequence of iterations $\{x^k\}$ at each step of gradient descent is updated by the following formula:
\begin{equation}\label{gd}
  x^{k+1}=x^k-t_k\nabla f(x^k).
\end{equation}

Here, let $x^*$ be a solution of \eqref{opt1}.
Using equation \eqref{gd}, we can see that the following inequality holds:
\begin{align}\label{standard}
  \|x^{k+1}-x^*\|^2&=\|x^k-t_k\nabla f(x^k)-x^*\|^2\nonumber\\
  &=\|x^k-x^*\|^2-2t_k\left\langle \nabla f(x^k),x^k-x^*\right\rangle+t_k^2\|\nabla f(x^k)\|^2\nonumber\\
  &\leq\|x^k-x^*\|^2-2t_k\left(f(x^k)-f(x^*)\right)+t_k^2\|\nabla f(x^k)\|^2
\end{align}
where the last inequality uses Lemma \ref{convex1} in the appendix since $f$ is a convex function.

This inequality is a crucial equation for showing the convergence to a solution.

First, we explain the Adaptive Gradient Descent method proposed in \cite{malitsky2019}.

Let the stepsize $t_k$ be given as follows.
\begin{mybox}{Adaptive Gradient Descent}
\begin{align}
  &L_k=\frac{\|\nabla f(x^k)-\nabla f(x^{k-1})\|}{\|x^k-x^{k-1}\|}\label{locallipschiz}\\
  &\theta_k=\frac{t_k}{t_{k-1}}\label{theta}\\
  &t_k=\min\left\{\sqrt{1+\theta_{k-1}}t_{k-1},\quad\frac{1}{2L_k}\right\}\label{adgdstep}
\end{align}
where $\theta_0$ is an appropriate positive real number.
\end{mybox}

This method is called Adaptive Gradient Descent (AdGD). In this paper, we refer to this method as AdGD-0.

It is known that a solution of \eqref{opt1} can be obtained using AdGD-0.
This is shown in the following Theorem \ref{adgdconv}.
\begin{theorem}(\cite{malitsky2019} Theorem 1.)\label{adgdconv}
  Suppose that $f\colon\mathbb{R}^d\to\mathbb{R}$ is convex with locally Lipschitz gradient $\nabla f$.
  Then $\{x^k\}$ generated by AdGD-0 converges to a solution $x^*$.\qed
\end{theorem}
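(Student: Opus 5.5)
Our approach is the Lyapunov-energy argument of Malitsky and Mishchenko. We start from \eqref{standard} and replace the term $t_k^2\|\nabla f(x^k)\|^2 = \|x^{k+1}-x^k\|^2$ by something that telescopes. Write $\|x^{k+1}-x^k\|^2 = 2\|x^{k+1}-x^k\|^2 - \|x^{k+1}-x^k\|^2$. Then express one copy as
\[
t_k\langle \nabla f(x^k)-\nabla f(x^{k-1}),\,x^k-x^{k+1}\rangle + t_k\langle \nabla f(x^{k-1}),\,x^k-x^{k+1}\rangle .
\]
For the first inner product we use Cauchy--Schwarz, the definition \eqref{locallipschiz} and $t_kL_k\le\tfrac12$ from \eqref{adgdstep}. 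This bounds it by $\tfrac14\|x^k-x^{k-1}\|^2+\tfrac14\|x^{k+1}-x^k\|^2$ via Young's inequality. For the second we use $\nabla f(x^{k-1}) = (x^{k-1}-x^k)/t_{k-1}$, so it equals $\theta_k\langle x^{k-1}-x^k,\,x^k-x^{k+1}\rangle$. By convexity (Lemma \ref{convex1} applied at $x^{k-1}$ and at $x^k$), this is at most $t_k\theta_k\bigl(f(x^{k-1})-f(x^k)\bigr)$.

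Collecting terms, we aim for the key inequality
\[
\|x^{k+1}-x^*\|^2+\tfrac12\|x^{k+1}-x^k\|^2+2t_k(1+\theta_k)\bigl(f(x^k)-f^*\bigr)
\le \|x^k-x^*\|^2+\tfrac12\|x^k-x^{k-1}\|^2+2t_k\theta_k\bigl(f(x^{k-1})-f^*\bigr).
\]
Next we check that the energy $\Psi_k := \|x^k-x^*\|^2+\tfrac12\|x^k-x^{k-1}\|^2+2t_{k}\theta_{k}(f(x^{k-1})-f^*)$ is nonincreasing. This requires $t_{k+1}\theta_{k+1}\le t_k(1+\theta_k)$, i.e.\ $t_{k+1}^2\le (1+\theta_k)t_k^2$, which is exactly the first branch of the $\min$ in \eqref{adgdstep}. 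Telescoping then gives three consequences: $\{x^k\}$ is bounded; $\|x^{k+1}-x^k\|\to0$; and $\sum_k 2t_k(1+\theta_k-\theta_{k+1}t_{k+1}/t_k)(f(x^k)-f^*)$ is finite, in particular $\sum t_k(f(x^k)-f^*)<\infty$ after a slightly careful split.

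\textbf{Main obstacle: the stepsizes must not shrink to zero.} Since the iterates are bounded, they lie in a compact set $C$, on which $\nabla f$ is $L$-Lipschitz by local Lipschitzness. Hence $L_k\le L$ and $\tfrac1{2L_k}\ge\tfrac1{2L}$. The first branch can only decrease $t_k$ relative to $t_{k-1}$ if $\theta_{k-1}$ is tiny, and it never forces $t_k<t_{k-1}$. Therefore an induction shows $t_k\ge\min\{t_0,\tfrac1{2L}\}=: \underline t>0$. One has to be careful that the segment $[x^{k-1},x^k]$ also stays in a compact set; convexity of the closed ball containing all iterates handles this.

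Consequently $\sum_k (f(x^k)-f^*)<\infty$, so $f(x^k)\to f^*$, and every cluster point is a minimizer. Finally, we would conclude convergence of the whole sequence by an Opial-type argument. For any cluster point $\bar x$ (a solution), the energy $\Psi_k$ computed with $x^*=\bar x$ is monotone, hence convergent. Along the subsequence tending to $\bar x$, the distance term tends to $0$, and the other two terms tend to $0$ as well, using $\|x^k-x^{k-1}\|\to0$ and $f(x^{k-1})\to f^*$ with $t_k\theta_k$ bounded. The bound on $t_k\theta_k$ follows because $t_k\theta_k\le t_{k-1}(1+\theta_{k-1})$ and $t_k\le\tfrac1{2L_k}$ is bounded whenever $L_k$ is bounded below; if $L_k=0$ occurs we instead use the first branch. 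Thus $\Psi_k\to0$, giving $x^k\to\bar x$.
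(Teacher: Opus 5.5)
Your key inequality, the monotonicity of $\Psi_k$ via $t_{k+1}^2\le(1+\theta_k)t_k^2$, the boundedness of $\{x^k\}$, and the induction $t_k\ge\min\{t_0,\frac{1}{2L}\}$ are all correct. This is the same Lyapunov scheme the paper follows for its extended intervals (Lemmas \ref{neq1}--\ref{positive}); for this cited statement the paper itself gives no proof. Your argument fails right after the telescoping, in two places.

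First, $\frac12\|x^{k+1}-x^k\|^2$ enters $\Psi_{k+1}$ and $\Psi_k$ with the same weight. Telescoping therefore only bounds $\|x^{k+1}-x^k\|$; it does not give $\|x^{k+1}-x^k\|\to0$.

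Second, and this is the real obstacle for AdGD-0 (not the lower bound on $t_k$): the only dissipated quantity is $\sum_k w_k\bigl(f(x^k)-f_*\bigr)$ with $w_k=t_k(1+\theta_k)-t_{k+1}\theta_{k+1}$. Whenever the first branch of the $\min$ is active at step $k+1$, one has $t_{k+1}\theta_{k+1}=t_{k+1}^2/t_k=(1+\theta_k)t_k$, so $w_k=0$ exactly. Nothing in your argument prevents this at infinitely many indices, and on those indices the telescoped sum says nothing about $f(x^k)-f_*$. Hence no ``careful split'' yields $\sum_k t_k(f(x^k)-f_*)<\infty$, and your proof that cluster points are minimizers is missing.

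The ingredient you need is an extra dissipation term, which is how the paper argues in the convergence theorem of Section 3.2. On $B(x^*,R)$, replace the convexity step in \eqref{standard} by Lemma \ref{lipschitz2}, giving $f_*-f(x^k)\ge\langle\nabla f(x^k),x^*-x^k\rangle+\frac{1}{2L}\|\nabla f(x^k)\|^2$ as in \eqref{standard2}. This instance is legitimate: since $\nabla f(x^*)=0$, the auxiliary point $x^*+\frac1L\nabla f(x^k)$ used in its proof stays in $B(x^*,R)$. Then $\frac{t_k}{L}\|\nabla f(x^k)\|^2$ appears on the left of your key inequality, so $\sum_k t_k\|\nabla f(x^k)\|^2<\infty$. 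With $t_k\ge\underline t$ this gives $\nabla f(x^k)\to0$, so every cluster point is a solution.

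Your Opial step also needs repair. The claim that $t_k\theta_k$ is bounded is false, because $L_k$ has no positive lower bound: for $f(x)=x^4$ one has $L_k\to0$ and the stepsizes are unbounded. You also never established $\|x^k-x^{k-1}\|\to0$. But you do not need $\Psi_k\to0$ at all. The extra terms $\frac12\|x^k-x^{k-1}\|^2+2t_k\theta_k(f(x^{k-1})-f_*)$ do not depend on which solution is used. So, writing $\Psi_k(\bar x)$ for $\Psi_k$ computed with $x^*=\bar x$, take two cluster points $\bar x_1,\bar x_2$ (both solutions). The sequence $\|x^k-\bar x_1\|^2-\|x^k-\bar x_2\|^2=\Psi_k(\bar x_1)-\Psi_k(\bar x_2)$ converges. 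Its limits along the two subsequences are $-\|\bar x_1-\bar x_2\|^2$ and $\|\bar x_1-\bar x_2\|^2$, which forces $\bar x_1=\bar x_2$. This whole-sequence convergence goes beyond what the paper proves for its extended intervals, where only optimality of accumulation points is shown.
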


As can be seen from this theorem, AdGD-0 can ensure convergence to a solution if the objective function is locally smooth, even if it is not globally smooth in general.

A method that analyzes this AdGD-0 in more detail is proposed in \cite{malitsky2024}.

Let the stepsize $t_k$ be given as follows.
\begin{mybox}{Adaptive Gradient Descent-1 (AdGD-1)}
\begin{equation}\label{adgdstep3}
    t_k=\min\left\{\sqrt{1+\theta_{k-1}}t_{k-1},\quad\frac{1}{\sqrt{2}L_k}\right\}
\end{equation}
where $L_k$ is the same as \eqref{locallipschiz} and $\theta_k$ is the same as \eqref{theta}.
\end{mybox}
In this paper, we call this method Adaptive Gradient Descent-1 (AdGD-1).

Similarly to Theorem \ref{adgdconv}, it can be shown that the sequence of iterations $\{x^k\}$ generated by AdGD-1 converges to a solution $x^*$ of \eqref{opt1} \cite{malitsky2024}.
Comparing the stepsize \eqref{adgdstep3} of AdGD-1 with the stepsize \eqref{adgdstep} of AdGD-0, it is clear that the former is larger than the latter.

Furthermore, \cite{malitsky2024} also proposes a method that gives the stepsize $t_k$ as follows.
\begin{mybox}{Adaptive Gradient Descent-2 (AdGD-2)}
\begin{equation}
    t_k=\min\left\{\sqrt{\frac{2}{3}+\theta_{k-1}}t_{k-1},\quad\frac{t_{k-1}}{\sqrt{[2t_{k-1}^2L_k^{2}-1]_+}}\right\}\label{adgdstep2}
\end{equation}
where $L_k$ is the same as \eqref{locallipschiz}, $\theta_k$ is the same as \eqref{theta} and
$[t]_+=\max\{t,0\}$.
\end{mybox}
In this paper, we call this method Adaptive Gradient Descent-2 (AdGD-2).

For this method as well, the sequence $\{x^k\}$ generated by AdGD-2 converges to a solution $x^*$. The following Theorem \ref{adgd2} shows this fact.
\begin{theorem}(\cite{malitsky2024} Theorem 2.)\label{adgd2}
  Let $f\colon\mathbb{R}^d\rightarrow\mathbb{R}$ be convex with locally Lipschitz gradient $\nabla f$.
  Then, the sequence $\{x^k\}$ generated by AdGD-2 converges to a solution $x^*$.\qed
\end{theorem}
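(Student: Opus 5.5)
We prove Theorem \ref{adgd2} with a Lyapunov (energy) argument built on inequality \eqref{standard}, following the AdGD template. The aim is a nonincreasing quantity of the form
\[
\Phi_k=\|x^{k+1}-x^*\|^2+a_k\bigl(f(x^k)-f(x^*)\bigr)+b_k\|x^{k+1}-x^k\|^2,
\]
with nonnegative weights $a_k,b_k$ built from $t_k$ and $\theta_k$. A natural first guess is $a_k=2t_k(1+\theta_k)$ and $b_k=\tfrac12$; the constants will be adjusted to match \eqref{adgdstep2}.

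\textbf{Step 1: one-step inequality.} Start from \eqref{standard} and split $t_k^2\|\nabla f(x^k)\|^2=\|x^{k+1}-x^k\|^2$. Bound this term by
\[
\|x^{k+1}-x^k\|^2=\langle x^k-x^{k+1},\,t_k\nabla f(x^k)-t_k\nabla f(x^{k-1})\rangle+t_k\langle x^k-x^{k+1},\nabla f(x^{k-1})\rangle .
\]
\begin{itemize}
\item For the second inner product, use $\nabla f(x^{k-1})=(x^{k-1}-x^k)/t_{k-1}$ and convexity between $x^k$ and $x^{k-1}$. This produces $\theta_k\bigl(f(x^{k-1})-f(x^k)\bigr)$ together with an inner product that can be absorbed.
\item For the first inner product, use the definition of $L_k$ in \eqref{locallipschiz} and Young's inequality. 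Unlike AdGD-0, we do not simply use Cauchy--Schwarz. Instead we use the sharper cocoercivity-type estimate $\langle \nabla f(x^k)-\nabla f(x^{k-1}),x^k-x^{k-1}\rangle\ge 0$. In the form $t_k^2\|\nabla f(x^k)-\nabla f(x^{k-1})\|^2\le t_k^2L_k^2\|x^k-x^{k-1}\|^2$, this is what creates the combination $2t_{k-1}^2L_k^2-1$.
\end{itemize}

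\textbf{Step 2: the two stepsize bounds give monotonicity.} We need two coefficient conditions.
\begin{itemize}
\item The condition $t_k^2(2t_{k-1}^2L_k^2-1)\le t_{k-1}^2$, i.e. the second branch of \eqref{adgdstep2}, should make the $\|x^k-x^{k-1}\|^2$ coefficient on the right at most $b_{k-1}$.
\item The condition $t_k^2\le(\tfrac23+\theta_{k-1})t_{k-1}^2$, i.e. the first branch, should make the function-value weights telescope: $a_k-(\text{coefficient of }f(x^{k-1})-f^*)\le a_{k-1}$.
\end{itemize}
The constant $\tfrac23$ suggests the weights must be tuned, for example $a_k=2t_k(\tfrac12+\theta_k)$-type combinations. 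I would fix $a_k,b_k$ by solving these two inequalities symbolically.

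The result is
\[
\Phi_k+c_k\bigl(f(x^k)-f^*\bigr)\le\Phi_{k-1}
\]
with $c_k\ge0$. Hence $\{x^k\}$ is bounded and $\sum_k c_k(f(x^k)-f^*)<\infty$.

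\textbf{Step 3: stepsizes stay away from zero, then convergence.} Because $\{x^k\}$ is bounded, $\nabla f$ is Lipschitz with some constant $L$ on a compact set containing the iterates, so $L_k\le L$. The second branch is then at least of order $1/(\sqrt2 L)$. The first branch multiplies $t_{k-1}$ by at least $\sqrt{2/3+\theta_{k-1}}$, which exceeds $1$ whenever $\theta_{k-1}>1/3$; so $t_k$ cannot decay to zero and $\liminf t_k>0$. From this, $f(x^k)\to f^*$, so every cluster point is a solution. Finally, apply an Opial-type argument: $\Phi_k$ with $x^*$ replaced by any solution converges, and $\|x^{k+1}-x^k\|\to0$. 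Together these give convergence of the whole sequence.

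\textbf{Main obstacles.} I expect the bookkeeping in Step 2 to be the main difficulty: finding weights compatible with exactly the constants $\tfrac23$ and $2t_{k-1}^2L_k^2-1$. A secondary subtlety is the lower bound on $t_k$ in Step 3, where $\theta$ can oscillate. I would first try to show $t_k\ge\min\{t_0,\,c/L\}$ by induction, using that a small $\theta_{k-1}$ forces $t_{k-1}$ to have been cut by the $L$-branch.
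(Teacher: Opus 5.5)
Your skeleton (energy inequality, boundedness, lower bound on $t_k$ by induction, then convergence) is the one the paper uses. The paper itself only cites this theorem. Its own argument is the appendix proof of Theorem~\ref{wadgd} with $\gamma_k=0$, which applies because the AdGD-2 step lies in $[l_k^{(2)},u_k^{(2)}]$.

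\textbf{Gap in Step 1.} This is where AdGD-2 genuinely differs from AdGD-1, and the mechanism you describe does not produce $2t_{k-1}^2L_k^2-1$. The relation $t_k^2\|\nabla f(x^k)-\nabla f(x^{k-1})\|^2= t_k^2L_k^2\|x^k-x^{k-1}\|^2$ is an identity (the definition of $L_k$), not an estimate. Applying monotonicity to the whole cross term, $t_k^2\langle\nabla f(x^{k-1}),\nabla f(x^k)-\nabla f(x^{k-1})\rangle\le0$, gives only $\|x^{k+1}-x^k\|^2\le t_k^2L_k^2\|x^k-x^{k-1}\|^2+t_k\theta_k(f(x^{k-1})-f(x^k))$. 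That closes only under the AdGD-1 condition $t_kL_k\le1/\sqrt2$, which the second branch of \eqref{adgdstep2} can violate. The term $-\frac{t_k^2}{t_{k-1}^2}\|x^k-x^{k-1}\|^2$ has already been thrown away at that point, so tuning $a_k,b_k$ afterwards cannot recover the AdGD-2 constants.

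The missing idea is to apply monotonicity (Lemma~\ref{norm1}) to only \emph{half} of $-t_k^2\|\nabla f(x^{k-1})\|^2$. Rewrite the other half via $t_{k-1}\nabla f(x^{k-1})=x^{k-1}-x^k$:
\begin{align*}
\|x^{k+1}-x^k\|^2&\le\frac{t_k^2(2t_{k-1}^2L_k^2-1)}{2t_{k-1}^2}\|x^k-x^{k-1}\|^2+\frac32\,t_k\theta_k\langle\nabla f(x^k),x^{k-1}-x^k\rangle\\
&\le\frac12\|x^k-x^{k-1}\|^2+\frac32\,t_k\theta_k\bigl(f(x^{k-1})-f(x^k)\bigr).
\end{align*}
Doubling and adding to \eqref{standard} gives \eqref{neq4} with $\gamma_k=0$. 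So the correct weights are $b_k=1$ and $a_k=t_k(2+3\theta_k)$, not your guesses. Telescoping then requires $3t_{k+1}\theta_{k+1}\le t_k(2+3\theta_k)$, which is exactly the first branch; the constant $\frac23=1/(1+\frac12)$ comes from the half split.

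\textbf{Gap in Step 3.} With these weights the per-step decrease $c_k=t_k(2+3\theta_k)-3t_{k+1}\theta_{k+1}$ is exactly zero whenever the first branch is active at step $k+1$, and the constant $\frac23$ leaves no slack. Hence $\sum_kc_k(f(x^k)-f_*)<\infty$ together with $\liminf t_k>0$ does not give $f(x^k)\to f_*$. The paper gets a genuinely positive term by replacing convexity in \eqref{standard} with Lemma~\ref{lipschitz2} on the ball, i.e.\ \eqref{standard2}. This adds $\frac{t_k}{L}\|\nabla f(x^k)\|^2$ to the left side, so $\sum_kt_k\|\nabla f(x^k)\|^2<\infty$. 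Combined with Lemma~\ref{positive2}, whose induction is the one you sketch, this gives $\nabla f(x^k)\to0$.

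Your closing Opial argument is sound, because the non-distance part of the energy does not depend on which solution is used; you do not need $\|x^{k+1}-x^k\|\to0$ for it. It is in fact required for the full-convergence claim, since the paper's own argument only shows that cluster points are solutions.
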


Here, we compare the stepsize of AdGD-2 with the stepsize of AdGD-1.
First, comparing the second terms, since
\begin{equation*}
  \frac{t_{k-1}}{\sqrt{[2t_{k-1}^2L_k^{2}-1]_+}}>\frac{t_{k-1}}{\sqrt{[2t_{k-1}^2L_k^{2}]_+}}=\frac{1}{\sqrt{2}L_k}
\end{equation*}
holds, the stepsize of AdGD-2 is larger than that of AdGD-1. On the other hand, comparing the first terms, it is clear that AdGD-1 is larger than AdGD-2.
Therefore, there is no definitive relationship between the magnitudes of the stepsizes of AdGD-1 and AdGD-2.

\section{Extension of the stepsize Interval Guaranteeing Global Convergence}
In this section, based on Adaptive Gradient Descent, we consider the range of stepsizes that ensure the global convergence of the sequence $\{x^k\}$, and expand it.

\subsection{stepsize Interval Guaranteeing Global Convergence}
In this section, we consider the stepsize interval that ensures convergence to a solution in Adaptive Gradient Descent.

First, let us consider the mathematical meanings of the upper and lower bounds in the stepsize interval that ensure global convergence.
Without an upper bound, the stepsize can become arbitrarily large, in which case the change in the sequence of iterations $\{x^k\}$ per step becomes too large, potentially causing it to diverge without converging.
Conversely, without a lower bound, the stepsize can become infinitesimally small, in which case the progress of the sequence $\{x^k\}$ at each iteration becomes negligible.
This delays convergence and, in the worst case, prevents the sequence from converging to a solution.
Therefore, the upper bound exists to prevent the sequence $\{x^k\}$ from diverging, while the lower bound exists to prevent the sequence $\{x^k\}$ from failing to reach a solution.

Then we specifically consider how the upper and lower bounds should be configured.
Let $\alpha_k$ and $\beta_k$ be the stepsizes of AdGD-1 and AdGD-2.
As can be seen in \cite{malitsky2024}, the convergence of AdGD-1 is proved using the following two conditions:
\begin{align*}
    &\alpha_k\leq\sqrt{1+\theta_{k-1}}\alpha_{k-1},\\
    &\alpha_k\leq\frac{1}{\sqrt{2}L_k}.
\end{align*}
Similarly, for AdGD-2, its convergence is proved using the following two conditions:
\begin{align*}
    &\beta_k\leq\sqrt{\frac{2}{3}+\theta_{k-1}}\beta_{k-1},\\
    &\beta_k\leq\frac{\beta_{k-1}}{\sqrt{[2\beta_{k-1}^2L_k^{2}-1]_+}}.
\end{align*}
Therefore, if we denote the upper bounds of the stepsize intervals that ensure the convergence of AdGD-1 and AdGD-2 as $u_k^{(1)}$ and $u_k^{(2)}$, they are expressed as follows:
\begin{align*}
&u_k^{(1)}=\min\left\{\sqrt{1+\theta_{k-1}}\alpha_{k-1},\quad\frac{1}{\sqrt{2}L_k}\right\},\\
&u_k^{(2)}=\min\left\{\sqrt{\frac{2}{3}+\theta_{k-1}}\beta_{k-1},\quad\frac{\beta_{k-1}}{\sqrt{[2\beta_{k-1}^2L_k^2-1]_+}}\right\}.
\end{align*}

On the other hand, let us consider the lower bound.
In the proof of convergence for Adaptive Gradient Descent, the property that "there exists a strictly positive constant $\tau>0$ satisfying $t_k\geq\tau$ for arbitrary stepsize $t_k$" is used \cite{malitsky2024}.
Therefore, the lower bound of the stepsize that guarantees convergence only needs to be bounded below, and its infimum must be strictly greater than $0$.
Thus, for $\delta\in(0,1)$, we set the lower bounds $l_k^{(1)}$ and $l_k^{(2)}$ of the stepsize intervals for AdGD-1 and AdGD-2 as follows:
\begin{align*}
&l_k^{(1)}=\min\left\{\sqrt{1+\theta_{k-1}}\alpha_{k-1},\quad\frac{\delta}{\sqrt{2}L_k}\right\},\\
&l_k^{(2)}=\min\left\{\sqrt{\frac{2}{3}+\theta_{k-1}}\beta_{k-1},\quad\frac{\delta}{\sqrt{2}L_k}\right\}.
\end{align*}
At this time, since $l_k^{(1)}$ and $l_k^{(2)}$ are bounded below and their infimums are strictly greater than $0$ (we will show in Sections 3.2 and 3.3), convergence is indeed ensured.

In other words, if the stepsize is chosen such that $t_k\in[l_k^{(1)},u_k^{(1)}]$ or $t_k\in[l_k^{(2)},u_k^{(2)}]$, the gradient descent method converges to a solution (since clearly $l_k^{(1)}\leq u_k^{(1)}$, and $l_k^{(2)}\leq u_k^{(2)}$ holds because $\frac{\delta}{\sqrt{2}L_k}\leq\frac{1}{\sqrt{2}L_k}\leq\frac{\beta_{k-1}}{\sqrt{[2\beta_{k-1}^2L_k^2-1]_+}}$, these closed intervals certainly exist).

\subsection{Extension of the stepsize Upper Bound}
In this section, we extend the upper bound of the stepsize interval that guarantees the global convergence of AdGD-1 and AdGD-2.

Let the stepsize $t_k$ be given as follows.
\begin{mybox}{Extension of the stepsize upper bound for AdGD-1}
\begin{align}
    &l_k^{(1)}=\min\left\{\sqrt{1+\theta_{k-1}}t_{k-1},\quad\frac{\delta}{\sqrt{2}L_k}\right\}\label{swadgdstep1}\\
    &u_k^{(1)}=\min\left\{\sqrt{1+\theta_{k-1}}t_{k-1},\quad\frac{1}{L_k}\sqrt{\frac{1}{2}\left(1+\frac{\gamma_k}{\|x^k-x^{k-1}\|^2}\right)}\right\}\label{swadgdstep2}\\
    &t_k\in\left[l_k^{(1)},u_k^{(1)}\right]\label{swadgdstep}
\end{align}
  where $L_k$ is the same as in equation \eqref{locallipschiz} and $\theta_k$ is the same as in equation \eqref{theta}.
  In addition, $\delta\in(0,1)$, and $\{\gamma_k\}$ is a sequence of real numbers satisfying $\gamma_k\geq0$ and $\sum_{k=1}^{\infty}\gamma_k<\infty$.
\end{mybox}

Under these conditions, the sequence of iterations $\{x^k\}$ generated by gradient descent has an accumulation point, and this accumulation point is a solution.
We can see that the upper bound is larger than that of the stepsize interval shown in the previous section by the term involving $\gamma_k$.

We prove this fact following a line of reasoning similar to the proof of the global convergence of AdGD-1.

First, we show that an inequality containing $\|x^k-x^*\|^2$ holds, from which a telescoping sum with respect to $k$ can be taken.

\begin{lemma}\label{neq1}
If the stepsize $t_k$ satisfies \eqref{swadgdstep}, the sequence $\{x^k\}$, a solution $x^*$, and the optimal value $f_*$ satisfy the following inequality;
\begin{equation}\label{neq3}
  \begin{split}
    \|x^{k+1}-x^*\|^2+&\|x^{k+1}-x^k\|^2+2t_k(1+\theta_k)(f(x^k)-f_*)\\
    &\leq\|x^k-x^*\|^2+\|x^k-x^{k-1}\|^2+2t_k\theta_k(f(x^{k-1})-f_*)+\gamma_k.
  \end{split}
\end{equation}
\end{lemma}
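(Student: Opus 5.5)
The plan is to follow the energy argument used for AdGD-1 in \cite{malitsky2024}. The only change is that the Lipschitz-type bound on $t_kL_k$ is replaced by the weaker bound in \eqref{swadgdstep2}, and the slack it costs is absorbed into $\gamma_k$.

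Write $g_k=\nabla f(x^k)$ and $\Delta_k=x^{k+1}-x^k=-t_kg_k$. Throughout I take $x^k\neq x^{k-1}$, so that $L_k$ is defined. The starting point is \eqref{standard}:
\[
\|x^{k+1}-x^*\|^2\le\|x^k-x^*\|^2-2t_k(f(x^k)-f_*)+\|\Delta_k\|^2 .
\]
The target \eqref{neq3} keeps $\|\Delta_k\|^2$ on the left. Moving it there, the claim reduces to
\[
2\|\Delta_k\|^2\le\|\Delta_{k-1}\|^2+\gamma_k+2t_k\theta_k\bigl(f(x^{k-1})-f(x^k)\bigr),
\]
because $-2t_k(f(x^k)-f_*)+2t_k\theta_k(f(x^{k-1})-f(x^k))=-2t_k(1+\theta_k)(f(x^k)-f_*)+2t_k\theta_k(f(x^{k-1})-f_*)$.

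To bound $2\|\Delta_k\|^2=2t_k\langle g_k,x^k-x^{k+1}\rangle$, I split $g_k=(g_k-g_{k-1})+g_{k-1}$.

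\emph{The $g_{k-1}$ term.} Since $t_kg_{k-1}=\theta_k(x^{k-1}-x^k)$ and $x^k-x^{k+1}=t_kg_k$,
\[
2t_k\langle g_{k-1},x^k-x^{k+1}\rangle=2t_k\theta_k\langle g_k,x^{k-1}-x^k\rangle\le2t_k\theta_k\bigl(f(x^{k-1})-f(x^k)\bigr),
\]
where the inequality is the convexity inequality of Lemma \ref{convex1}. This produces exactly the function-value term needed.

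\emph{The difference term.} By Cauchy--Schwarz and the definition \eqref{locallipschiz},
\[
2t_k\langle g_k-g_{k-1},x^k-x^{k+1}\rangle\le2t_kL_k\|\Delta_{k-1}\|\,\|\Delta_k\|.
\]
The upper bound \eqref{swadgdstep2} squared gives
\[
t_k^2L_k^2\|\Delta_{k-1}\|^2\le\tfrac12\bigl(\|\Delta_{k-1}\|^2+\gamma_k\bigr),
\]
and this is the only place where the new $\gamma_k$ enters. The lower bound $l_k^{(1)}$ and the $\sqrt{1+\theta_{k-1}}$ cap are not needed for this one-step inequality; they matter only later, for summability and for $t_k\ge\tau>0$.

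The step I expect to be the main obstacle is closing the difference term with the right constants. A plain Young inequality $2ab\le a^2+b^2$ returns $\|\Delta_k\|^2$ on the right and, with the bound above, only yields $\|\Delta_k\|^2\le\frac12(\|\Delta_{k-1}\|^2+\gamma_k)+2t_k\theta_k(\cdots)$. Doubling that would double the coefficient of the function-value term, which is not acceptable.

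What I would try first is to avoid Young altogether and use the polarization identity
\[
2\langle a,b\rangle=\|a\|^2+\|b\|^2-\|a-b\|^2,\qquad a=t_k(g_k-g_{k-1}),\quad b=\Delta_k .
\]
Here $a-b=-t_kg_{k-1}$, and the surplus $\|a\|^2\le t_k^2L_k^2\|\Delta_{k-1}\|^2$ is controlled by the squared bound above. Within this, the first thing to settle is how to combine the resulting $-\|t_kg_{k-1}\|^2=-\theta_k^2\|\Delta_{k-1}\|^2$ with the convexity step so that the coefficient $2t_k\theta_k$ survives and the $\|\Delta\|$ terms telescope with coefficient $1$.

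If that does not balance, I would fall back on the exact splitting used in \cite{malitsky2024} for AdGD-1, which includes the relevant Young weights. The reason to expect it to carry over is that the new upper bound enters only through the single product $t_k^2L_k^2\|\Delta_{k-1}\|^2$, and it exceeds the AdGD-1 value $\frac12\|\Delta_{k-1}\|^2$ by exactly $\frac{\gamma_k}{2}$.
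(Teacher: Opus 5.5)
Your reduction to $2\|\Delta_k\|^2\le\|\Delta_{k-1}\|^2+\gamma_k+2t_k\theta_k(f(x^{k-1})-f(x^k))$ is exactly the paper's \eqref{neq5}, and your treatment of the $g_{k-1}$ piece is correct. There is a genuine gap, though: you never bound the difference term.

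\textbf{The missing ingredient.} What closes the difference term is monotonicity of $\nabla f$ along the previous step, i.e. Lemma \ref{norm1}, and it does not appear in your proposal. Since $x^k-x^{k-1}=-t_{k-1}g_{k-1}$, applying Lemma \ref{convex1} at both $x^k$ and $x^{k-1}$ and adding gives $\langle g_k-g_{k-1},x^k-x^{k-1}\rangle\ge0$. This is the same as $\langle g_k-g_{k-1},g_{k-1}\rangle\le0$, or equivalently $\langle g_k,g_{k-1}\rangle\le\|g_{k-1}\|^2$. You use convexity only once, at $x^k$, and that single application does not supply it.

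\textbf{Why your two routes stall.} Cauchy--Schwarz throws away exactly this sign information, which is why Young then leaves you with $4t_k\theta_k$. Polarization alone only rederives the identity \eqref{swadgdneq1}, namely $\|\Delta_k\|^2=t_k^2L_k^2\|\Delta_{k-1}\|^2-t_k^2\|g_{k-1}\|^2+2t_k^2\langle g_k,g_{k-1}\rangle$. The paper also has to finish that identity with Lemma \ref{norm1}. A minor point: to get $a-b=-t_kg_{k-1}$ you need $b=x^k-x^{k+1}=-\Delta_k$. Deferring to ``the exact splitting'' of the AdGD-1 analysis is not a proof step, so as written the key estimate is missing.

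\textbf{How it closes.} With monotonicity, your own split finishes in one line, using the squared form of \eqref{swadgdstep2}:
\[
2t_k^2\langle g_k-g_{k-1},g_k\rangle=2t_k^2\|g_k-g_{k-1}\|^2+2t_k^2\langle g_k-g_{k-1},g_{k-1}\rangle\le2t_k^2L_k^2\|\Delta_{k-1}\|^2\le\|\Delta_{k-1}\|^2+\gamma_k .
\]
Adding your bound on the $g_{k-1}$ piece then gives \eqref{neq5}. The paper's bookkeeping differs only cosmetically. It uses Lemma \ref{norm1} to replace $-t_k^2\|g_{k-1}\|^2+2t_k^2\langle g_k,g_{k-1}\rangle$ by $t_k^2\langle g_k,g_{k-1}\rangle$ and charges only this half to convexity. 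That yields $\|\Delta_k\|^2\le\frac12\|\Delta_{k-1}\|^2+t_k\theta_k(f(x^{k-1})-f(x^k))+\frac{\gamma_k}{2}$, which it then doubles. You are right that only the upper bound from \eqref{swadgdstep2} is needed for this one-step inequality.
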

\begin{proof}
Rewriting $\|x^{k+1}-x^k\|^2$, we have
\begin{align}
  \|x^{k+1}-x^k\|^2&=t_k^2\|\nabla f(x^k)\|^2\nonumber\\
  &=t_k^2\left(\|\nabla f(x^k)-\nabla f(x^{k-1})\|^2-\|\nabla f(x^{k-1})\|^2+2\left\langle\nabla f(x^k),\nabla f(x^{k-1})\right\rangle\right)\nonumber\\
  &=t_k^2L_k^2\|x^k-x^{k-1}\|^2-t_k^2\|\nabla f(x^{k-1})\|^2+2t_k^2\left\langle\nabla f(x^k),\nabla f(x^{k-1})\right\rangle.\label{swadgdneq1}
\end{align}
From \eqref{swadgdstep2} and \eqref{swadgdstep}, we obtain
\[
t_k\leq\frac{1}{L_k}\sqrt{\frac{1}{2}\left(1+\frac{\gamma_k}{\|x^k-x^{k-1}\|^2}\right)}.
\]
Squaring both sides and rearranging, we obtain
\begin{equation*}
  t_k^2L_k^2\leq\frac{1}{2}+\frac{\gamma_k}{2\|x^k-x^{k-1}\|^2}.
\end{equation*}
Using this and Lemma \ref{norm1}, equation \eqref{swadgdneq1} becomes
\begin{align*}
    \|x^{k+1}-x^k\|^2&\leq\frac{1}{2}\|x^k-x^{k-1}\|^2+\frac{\gamma_k}{2}-t_k^2\|\nabla f(x^{k-1})\|^2+2t_k^2\left\langle\nabla f(x^k),\nabla f(x^{k-1})\right\rangle\\
    &\leq\frac{1}{2}\|x^k-x^{k-1}\|^2+t_k^2\left\langle\nabla f(x^k),\nabla f(x^{k-1})\right\rangle+\frac{\gamma_k}{2}\\
    &=\frac{1}{2}\|x^k-x^{k-1}\|^2+t_k\theta_k\left\langle\nabla f(x^k),x^{k-1}-x^k\right\rangle+\frac{\gamma_k}{2}\\
    &\leq\frac{1}{2}\|x^k-x^{k-1}\|^2+t_k\theta_k\left(f(x^{k-1})-f(x^k)\right)+\frac{\gamma_k}{2},
\end{align*}
where the last inequality uses the fact that Lemma \ref{convex1} in the appendix holds for $x^k$ and $x^*$ due to the convexity of $f$.
Multiplying this by 2 and rearranging terms yields
\begin{equation}\label{neq5}
  \|x^{k+1}-x^k\|^2\leq\|x^k-x^{k-1}\|^2-\|x^{k+1}-x^k\|^2+2t_k\theta_k(f(x^{k-1})-f(x^k))+\gamma_k.
\end{equation}
By adding this to equation \eqref{standard}, we obtain
\begin{equation*}
  \begin{split}
    \|x^{k+1}-x^*\|^2+\|x^{k+1}-x^k\|^2\leq&\|x^k-x^*\|^2-2t_k(f(x^k)-f_*)+t_k^2\|\nabla f(x^k)\|^2\\
    &+\|x^k-x^{k-1}\|^2-\|x^{k+1}-x^k\|^2+2t_k\theta_k(f(x^{k-1})-f(x^k))+\gamma_k.
  \end{split}
\end{equation*}
Rearranging this by using equation \eqref{gd}, the following holds;
\begin{equation*}
  \begin{split}
    \|x^{k+1}-x^*\|^2+&\|x^{k+1}-x^k\|^2+2t_k(1+\theta_k)(f(x^k)-f_*)\\
    &\leq\|x^k-x^*\|^2+\|x^k-x^{k-1}\|^2+2t_k\theta_k(f(x^{k-1})-f_*)+\gamma_k.
  \end{split}
\end{equation*}
\end{proof}

Then, we show that the sequence $\{x^k\}$ is contained within a closed ball centered on a solution $x^*$ by taking the telescoping sum of the inequality obtained in Lemma \ref{neq1}.
\begin{lemma}\label{swadgdbounds}
  If the stepsize $t_k$ satisfies \eqref{swadgdstep}, the sequence of iterations $\{x^k\}$ is bounded.
  Furthermore, for a solution $x^*$, $x^k\in B(x^*,R)$ holds, where $R$ is a constant determined by $x^*$, the initial values, and $\{\gamma_k\}$.
\end{lemma}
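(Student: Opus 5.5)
Set $\Phi_k=\|x^k-x^*\|^2+\|x^k-x^{k-1}\|^2+2t_{k-1}(1+\theta_{k-1})(f(x^{k-1})-f_*)$ and turn the inequality \eqref{neq3} of Lemma \ref{neq1} into $\Phi_{k+1}\le\Phi_k+\gamma_k$. Summing this telescoping bound gives $\|x^k-x^*\|^2\le\Phi_k\le\Phi_1+\sum_{j\ge1}\gamma_j$. We may then take $R^2=\Phi_1+\sum_{j=1}^\infty\gamma_j$, which is finite because $\sum\gamma_k<\infty$. This $R$ depends only on $x^*$, the initial data $x^0,x^1,t_0,\theta_0$, and $\{\gamma_k\}$.

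To get the recursion, compare the last term on the right of \eqref{neq3}, $2t_k\theta_k(f(x^{k-1})-f_*)$, with the last term of the left side at the previous index, $2t_{k-1}(1+\theta_{k-1})(f(x^{k-1})-f_*)$. Since $f(x^{k-1})-f_*\ge0$, it suffices to show
\[
t_k\theta_k\le t_{k-1}(1+\theta_{k-1}).
\]
Because $\theta_k=t_k/t_{k-1}$, this is the same as $t_k^2\le(1+\theta_{k-1})t_{k-1}^2$, i.e.\ $t_k\le\sqrt{1+\theta_{k-1}}\,t_{k-1}$. That holds because $t_k\le u_k^{(1)}$ and $u_k^{(1)}$ is a minimum with this first term, by \eqref{swadgdstep2} and \eqref{swadgdstep}. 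This is the only place the first term of the upper bound is used, and it is exactly what makes the energy terms telescope.

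With this in hand, \eqref{neq3} gives
\[
\|x^{k+1}-x^*\|^2+\|x^{k+1}-x^k\|^2+2t_k(1+\theta_k)(f(x^k)-f_*)\le \Phi_k+\gamma_k,
\]
that is, $\Phi_{k+1}\le\Phi_k+\gamma_k$ for all $k\ge1$. Iterating yields $\Phi_{k+1}\le\Phi_1+\sum_{j=1}^{k}\gamma_j$, and dropping the nonnegative terms gives $\|x^{k}-x^*\|\le R$ for all $k\ge1$. The index $k=0$ is covered by enlarging $R$ to include $\|x^0-x^*\|$. Boundedness of $\{x^k\}$ follows immediately.

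The main thing to be careful about is bookkeeping rather than a hard estimate. First, the starting index must be handled: Lemma \ref{neq1} is stated for $k\ge1$, so $\Phi_1$ involves $t_0$ and $\theta_0$, which are initial data. Second, every discarded term must be nonnegative, which uses $f(x^k)\ge f_*$ and $t_k,\theta_k>0$. Third, we must make sure $L_k$ is well defined in \eqref{neq3}. If $x^k=x^{k-1}$ then $\nabla f(x^{k-1})=0$ and $x^{k-1}$ is already a solution, so we may assume $x^k\ne x^{k-1}$ throughout.
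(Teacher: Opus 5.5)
Your proof is correct and is essentially the paper's argument. The paper telescopes \eqref{neq3} directly and shows that the coefficients $b_i=t_i(1+\theta_i)-t_{i+1}\theta_{i+1}$ are nonnegative via $t_k\theta_k\le(1+\theta_{k-1})t_{k-1}$, which is exactly your monotonicity $\Phi_{k+1}\le\Phi_k+\gamma_k$ written as a sum. The only cosmetic difference is that the paper further bounds your $\Phi_1$ using \eqref{standard} at $k=0$, which gives the explicit $R^2=\|x^0-x^*\|^2+2t_0\theta_0(f(x^0)-f_*)+2t_0^2\|\nabla f(x^0)\|^2+S$, so no separate enlargement of $R$ for $x^0$ is needed.
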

\begin{proof}
Taking the telescoping sum of \eqref{neq3}, we get
\begin{align}
  \|x^{k+1}-x^*\|^2&+\|x^{k+1}-x^k\|^2+2\sum_{i=1}^{k}b_i(f(x^i)-f_*)\nonumber\\
  &\leq\|x^1-x^*\|^2+\|x^1-x^0\|^2+2t_1\theta_1(f(x^0)-f_*)+\sum_{i=1}^k\gamma_i,\label{swadgdneq2}
\end{align}
where $\{b_i\}$ is a sequence of real numbers defined as follows:
\begin{equation}\label{b}
  b_i=
  \begin{cases}
    t_i(1+\theta_i)                     & \text{if $i=k$,} \\
    t_i(1+\theta_i)-t_{i+1}\theta_{i+1} & \text{if $1\leq i\leq k-1$.} 
  \end{cases}
\end{equation}
Also, from \eqref{swadgdstep2} and \eqref{swadgdstep}, we have $t_k\leq\sqrt{1+\theta_{k-1}}t_{k-1}$.
Squaring both sides yields
\begin{equation*}
  t_k^2\leq(1+\theta_{k-1})t_{k-1}^2,
\end{equation*}
and dividing this by $t_{k-1}$ gives
\begin{equation}\label{swadgdstep3}
  t_k\theta_k\leq(1+\theta_{k-1})t_{k-1}.
\end{equation}

Here, from \eqref{gd}, \eqref{standard}, and \eqref{swadgdstep3}, we have
\begin{align*}
    \|x^1-x^*\|^2&+\|x^1-x^0\|^2+2t_1\theta_1(f(x^0)-f_*)\\
    &\leq\|x^0-x^*\|^2+2(t_1\theta_1-t_0)(f(x^0)-f_*)+2t_0^2\|\nabla f(x^0)\|^2\\
    &\leq\|x^0-x^*\|^2+2t_0\theta_0(f(x^0)-f_*)+2t_0^2\|\nabla f(x^0)\|^2.
\end{align*}
Furthermore, since $\sum_{k=1}^{\infty}\gamma_k$ is bounded by assumption, using a certain real number $S>0$, the following inequality holds for arbitrary $k\in\mathbb{N}$;
\begin{equation*}
  \sum_{i=1}^k\gamma_i\leq S.
\end{equation*}
Therefore, from \eqref{swadgdneq2},
\begin{align}\label{swadgdupper}
  \|x^{k+1}-x^*\|^2+&\|x^{k+1}-x^k\|^2+2\sum_{i=1}^{k}b_i(f(x^i)-f_*) \nonumber\\
  &\leq\|x^0-x^*\|^2+2t_0\theta_0(f(x^0)-f_*)+2t_0^2\|\nabla f(x^0)\|^2+S \nonumber\\
  &\colon=R^2
\end{align}
holds (where we assume $R>0$).

Here,
\begin{itemize}
\item When $i=k$, $b_k=t_k(1+\theta_k)\geq0$ since $t_k\geq0$ and $(1+\theta_k)>0$.
\item When $1\leq i\leq k-1$, $b_i=t_i(1+\theta_i)-t_{i+1}\theta_{i+1}\geq0$ from equation \eqref{swadgdstep3}.
\end{itemize}
Thus, $b_i\geq0$ for $1\leq i\leq k$. Furthermore, since $f_*$ is the optimal value, $f(x^i)-f_*\geq0$ holds.

Therefore, the following inequality holds;
\begin{equation*}
  \|x^{k+1}-x^*\|^2\leq R^2.
\end{equation*}
Since this holds for any $k$, $\{x^k\}$ is bounded and $x^k\in B(x^*,R)$ holds.
\end{proof}

From this lemma, even if $f$ is locally smooth, by restricting our discussion within this closed ball, we can definitively treat it as smooth with an existing Lipschitz constant $L$.

As described in Section 3.1, the proof of convergence also requires that the stepsize is bounded below by a strictly positive constant, which can be shown using the Lipschitz constant $L$.
The next lemma demonstrates this fact.
\begin{lemma}\label{positive}
If $t_k$ satisfies \eqref{swadgdstep}, it satisfies the following inequality;
\begin{equation}\label{minstep-1}
  t_k\geq\min\left\{t_0,\frac{\delta}{\sqrt{2}L}\right\},
\end{equation}
where $L$ is the Lipschitz constant of $\nabla f(x)$ within the closed ball $B(x^*,R)$ from Lemma \ref{swadgdbounds}.
\end{lemma}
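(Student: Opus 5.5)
We argue by induction on $k$, with the claim $t_k\geq\min\{t_0,\delta/(\sqrt{2}L)\}=:\tau$. The base case $k=0$ is trivial. For the inductive step, assume $t_{k-1}\geq\tau$ and use $t_k\geq l_k^{(1)}$ from \eqref{swadgdstep1} and \eqref{swadgdstep}. Since $l_k^{(1)}$ is a minimum of two terms, it suffices to show that each of them is at least $\tau$.

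\emph{First term.} Since $\theta_{k-1}=t_{k-1}/t_{k-2}>0$ (all stepsizes are positive, and $\theta_0>0$ by assumption), we have $\sqrt{1+\theta_{k-1}}\,t_{k-1}\geq t_{k-1}\geq\tau$ by the induction hypothesis.

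\emph{Second term.} By Lemma \ref{swadgdbounds}, both $x^k$ and $x^{k-1}$ lie in $B(x^*,R)$. On this ball $\nabla f$ is $L$-Lipschitz, so the local estimate \eqref{locallipschiz} satisfies $L_k\leq L$. Hence $\delta/(\sqrt{2}L_k)\geq\delta/(\sqrt{2}L)\geq\tau$.

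Combining the two terms gives $t_k\geq l_k^{(1)}\geq\tau$, which completes the induction.

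Two degenerate situations need attention, and these are the only delicate points.
\begin{enumerate}
\item If $x^k=x^{k-1}$ or $\nabla f(x^k)=\nabla f(x^{k-1})$, then $L_k$ is $0$ or undefined. We adopt the convention that $\delta/(\sqrt{2}L_k)=+\infty$, so $l_k^{(1)}$ reduces to the first term and the bound still holds. If $x^k=x^{k-1}$ arises because $\nabla f(x^{k-1})=0$, the iterates have reached a solution and the claim is vacuous.
\item $L$ must be the Lipschitz constant on the closed ball $B(x^*,R)$. It exists and is finite because $\nabla f$ is locally Lipschitz and the ball is compact; a finite subcover argument yields one constant for the whole ball. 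Lemma \ref{swadgdbounds} guarantees that every pair $x^k,x^{k-1}$ used to define $L_k$ lies in this ball.
\end{enumerate}

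The base case also needs care: $l_1^{(1)}$ involves $\theta_0$ and $t_0$, which is why $t_0$ appears in the bound. The first-term argument above already handles $k=1$, because $\theta_0>0$.
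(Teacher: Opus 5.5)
Your proof is correct and is essentially the paper's argument. The paper runs the same induction but splits into cases according to which term attains the minimum in $l_k^{(1)}$ (also noting in the first case that $l_k^{(1)}=u_k^{(1)}$, which is not needed); you instead bound both terms of the minimum by $\min\{t_0,\delta/(\sqrt{2}L)\}$ directly, using $\sqrt{1+\theta_{k-1}}\geq 1$ and $L_k\leq L$. One small correction: the convergence theorem only assumes $\theta_0\geq 0$, and the experiments use $\theta_0=0$, but $\theta_{k-1}\geq 0$ is all your first-term bound needs, so nothing breaks.
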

\begin{proof}
  First, depending on the value of $l_k^{(1)}$, the possible interval for $t_k$ can be categorized into the following cases:
  \begin{description}
    \item[(Case 1)] When $l_k^{(1)}=\sqrt{1+\theta_{k-1}}t_{k-1}$, we have $\sqrt{1+\theta_{k-1}}t_{k-1}\leq\frac{\delta}{\sqrt{2}L_k}$ from \eqref{swadgdstep1}.
    Here, since 
    \[
    \frac{\delta}{\sqrt{2}L_k}<\frac{1}{\sqrt{2}L_k}\leq\frac{1}{L_k}\sqrt{\frac{1}{2}\left(1+\frac{\gamma_k}{\|x^k-x^{k-1}\|^2}\right)}
    \]
    holds, we consequently have $u_k^{(1)}=\sqrt{1+\theta_{k-1}}t_{k-1}$.
    That is, since $l_k^{(1)}=u_k^{(1)}$, we obtain
    \begin{equation}\label{condition1-1}
    t_k=\sqrt{1+\theta_{k-1}}t_{k-1}.
    \end{equation}
    \item[(Case 2)] When $l_k^{(1)}=\frac{\delta}{\sqrt{2}L_k}$, we have $\frac{\delta}{\sqrt{2}L_k}\leq\sqrt{1+\theta_{k-1}}t_{k-1}$ from \eqref{swadgdstep1}.
    Since 
    \[
    \frac{\delta}{\sqrt{2}L_k}<\frac{1}{L_k}\sqrt{\frac{1}{2}\left(1+\frac{\gamma_k}{\|x^k-x^{k-1}\|^2}\right)}
    \]
    also holds, we consequently obtain
    \begin{equation}\label{condition1-2}
      t_k\in\left[\frac{\delta}{\sqrt{2}L_k},u_k^{(1)}\right].
    \end{equation}
  \end{description}

  We prove the proposition using mathematical induction.
  \begin{itemize}
    \item When $k=1$, if $t_1$ falls into (Case 1), $t_1=\sqrt{1+\theta_0}t_0\geq t_0$ from \eqref{condition1-1}. If $t_1$ falls into (Case 2), $t_1\geq\frac{\delta}{\sqrt{2}L_k}\geq\frac{\delta}{\sqrt{2}L}$ from \eqref{condition1-2}. Therefore, \eqref{minstep-1} holds in both cases.
    \item Assume that $t_{k-1}$ satisfies \eqref{minstep-1}. First, if $t_k$ falls into (Case 1), $t_k=\sqrt{1+\theta_{k-1}}t_{k-1}\geq t_{k-1}$ from \eqref{condition1-1}; thus, $t_k$ also satisfies \eqref{minstep-1} by the induction hypothesis. On the other hand, if $t_k$ falls into (Case 2), $t_k\geq\frac{\delta}{\sqrt{2}L_k}\geq\frac{\delta}{\sqrt{2}L}$ from \eqref{condition1-2}, which satisfies \eqref{minstep-1}. Therefore, $t_k$ satisfies \eqref{minstep-1} in both cases.
  \end{itemize}
  Consequently, \eqref{minstep-1} holds for any $k\in\mathbb{N}$.
\end{proof}

From these lemmas, the following theorem regarding convergence to a solution holds.
\begin{theorem}\label{swadgd}
Let $f\colon\mathbb{R}^d\rightarrow\mathbb{R}$ be convex with locally Lipschitz gradient $\nabla f$.
If the stepsize $t_k$ satisfies \eqref{swadgdstep}, the sequence $\{x^k\}$ is bounded, and its accumulation point is an optimal solution, where the initial values are $t_0>0$, $\theta_0\geq0$, and $x^0\in\mathbb{R}^d$.
\end{theorem}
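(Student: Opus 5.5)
Boundedness is already supplied by Lemma \ref{swadgdbounds}, so the work lies in showing that every accumulation point is optimal. I plan to extract from the telescoped inequality \eqref{swadgdupper} a summable quantity that forces $f(x^k)\to f_*$ along the whole sequence. The plan has three steps: a summability bound, a pass from function values to gradients, and a limiting argument.

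First, rewrite the left-hand side of \eqref{swadgdupper} with the weights $b_i$ from \eqref{b}. Since all $b_i\ge 0$ and every term is nonnegative, letting $k\to\infty$ gives two bounds. The first is $\sum_{i}\|x^{i+1}-x^i\|^2\le R^2$. To get it, I add $\|x^{k+1}-x^k\|^2$ back into the telescoping: inequality \eqref{neq5}, summed over $k$, bounds $\sum_k \|x^{k+1}-x^k\|^2$ in terms of the telescoping function-value differences $2t_k\theta_k(f(x^{k-1})-f(x^k))$ plus $\sum\gamma_k$. The second, simpler bound is $\|x^{k+1}-x^k\|^2\le R^2$ for each $k$. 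The main route, though, is through the first bound.

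Second, I use Lemma \ref{positive}: $t_k\ge\tau:=\min\{t_0,\delta/(\sqrt2 L)\}>0$, where $L$ is the Lipschitz constant of $\nabla f$ on $B(x^*,R)$. I also need an upper bound on $t_k$. Because $x^k,x^{k-1}\in B(x^*,R)$, we have $L_k\le L$, which yields an upper bound in terms of $1/L_k$, not $1/L$. So instead I argue via the step lengths. From $\|x^{k+1}-x^k\|^2=t_k^2\|\nabla f(x^k)\|^2\ge\tau^2\|\nabla f(x^k)\|^2$ and the summability of $\|x^{k+1}-x^k\|^2$, it follows that $\|\nabla f(x^k)\|\to0$.

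The obstacle is establishing that summability cleanly. Summing \eqref{neq5} gives
\[
\sum_{k=1}^{K}\|x^{k+1}-x^k\|^2\le \|x^1-x^0\|^2+2\sum_{k=1}^{K} t_k\theta_k\bigl(f(x^{k-1})-f(x^k)\bigr)+S,
\]
and the middle sum is not directly telescoping because its weights vary. I would therefore instead add \eqref{neq5} to Lemma \ref{neq1} in a combined Lyapunov form $\Phi_k=\|x^k-x^*\|^2+\|x^k-x^{k-1}\|^2+2t_k\theta_k(f(x^{k-1})-f_*)$. Before adding \eqref{neq5}, I keep the term $\|x^{k+1}-x^k\|^2$ that \eqref{neq5} subtracts on its right-hand side. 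This gives $\Phi_{k+1}+\|x^{k+1}-x^k\|^2\cdot c\le\Phi_k+\gamma_k$ for some $c>0$, where the slack comes from the choice $t_k^2L_k^2\le\frac12+\cdots$. If that slack is unavailable, the fallback is to use $\sum_i b_i(f(x^i)-f_*)<\infty$ together with $b_i$, and to note that $\sum_i b_i\ge\sum_i t_i\to\infty$ up to boundary terms. Since $\sum_{i\le k} b_i=t_k(1+\theta_k)+\sum_{i<k}t_i$ and $t_i\ge\tau$, this yields $\liminf_k (f(x^k)-f_*)=0$.

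Finally, take a convergent subsequence $x^{k_j}\to\bar x$ from boundedness. Either $\nabla f(x^{k_j})\to0$ together with continuity of $\nabla f$ gives $\nabla f(\bar x)=0$, or $f(x^{k_j})\to f_*$ together with continuity of $f$ gives $f(\bar x)=f_*$. By convexity $\bar x$ is then optimal. In the $\liminf$ route this only covers some accumulation point. To cover all of them, I would show that $\Phi_k$ converges, since $\Phi_{k+1}\le\Phi_k+\gamma_k$ with $\sum\gamma_k<\infty$, and then run the standard Opial-type argument with $x^*$ replaced by $\bar x$.
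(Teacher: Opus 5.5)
There is a genuine gap: neither of your routes shows that \emph{every} accumulation point is optimal, which is what the paper proves.

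\textbf{The primary route.} It needs a leftover term $c\|x^{k+1}-x^k\|^2$ with $c>0$, and none exists. In the proof of Lemma~\ref{neq1}, \eqref{neq5} reads $2\|x^{k+1}-x^k\|^2\le\|x^k-x^{k-1}\|^2+2t_k\theta_k(f(x^{k-1})-f(x^k))+\gamma_k$. Meanwhile \eqref{standard} carries $+t_k^2\|\nabla f(x^k)\|^2=+\|x^{k+1}-x^k\|^2$ on its right-hand side. After adding, exactly one copy of the step term survives on each side. The constants in \eqref{swadgdstep2} (the $\tfrac12$ together with $\sqrt{1+\theta_{k-1}}$) are calibrated to this balance, and $t_k=u_k^{(1)}$ is admissible. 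So convexity plus the stepsize rule leave nothing over. For the same reason, letting $k\to\infty$ in \eqref{swadgdupper} cannot give $\sum_i\|x^{i+1}-x^i\|^2\le R^2$, because only the last step appears there.

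\textbf{The fallback.} It is correct as far as it goes. Since $\sum_{i\le k}b_i=\sum_{i\le k}t_i+t_1\theta_1\ge k\tau$, you get $\min_{i\le k}(f(x^i)-f_*)\le R^2/(2k\tau)$, so \emph{some} accumulation point is optimal.

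\textbf{The Opial step does not close.} To pass from convergence of $\Phi_k(\bar x)$ to $x^k\to\bar x$, you need $\lim_k\Phi_k(\bar x)=0$. That means a subsequence along which $\|x^{k_j}-\bar x\|$, $\|x^{k_j}-x^{k_j-1}\|$ and $t_{k_j}\theta_{k_j}(f(x^{k_j-1})-f_*)$ all vanish. Your fallback gives only $x^{m_j}\to\bar x$ with $f(x^{m_j})\to f_*$. Shifting to $k_j=m_j+1$ would require $t_{m_j}\|\nabla f(x^{m_j})\|\to0$ and $t_{m_j}(f(x^{m_j})-f_*)\to0$. Both need an upper bound on $t_k$, and, as you observed yourself, the rule does not supply one. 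The abstract Opial lemma does not help either: it presupposes that all cluster points are solutions, which is the statement to be proved.

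\textbf{The missing idea.} The slack comes from smoothness, not from the stepsize rule. On $B(x^*,R)$ the gradient is $L$-Lipschitz. Lemma~\ref{lipschitz2} with $x=x^*$, $y=x^k$ strengthens the convexity step in \eqref{standard} by $-\frac{t_k}{L}\|\nabla f(x^k)\|^2$, which gives \eqref{standard2}. Adding \eqref{neq5} and telescoping exactly as in Lemma~\ref{swadgdbounds} yields $\sum_i t_i\|\nabla f(x^i)\|^2\le LR^2$. Here $t_i$ enters only to the first power, unlike in $\|x^{i+1}-x^i\|^2=t_i^2\|\nabla f(x^i)\|^2$. So the lower bound $t_i\ge\tau$ from Lemma~\ref{positive} alone forces $\nabla f(x^i)\to0$ along the whole sequence. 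Continuity of $\nabla f$ then makes every accumulation point a solution, with no Opial argument needed.
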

\begin{proof}
  From Lemma \ref{swadgdbounds}, $\{x^k\}$ is bounded.
  Thus, an accumulation point exists.

  Here, since $B(x^*,R)$ is a convex set and $f$ is locally smooth, the mapping $f(x)$ with domain $x\in B(x^*,R)$ is L-smooth for a certain constant $L>0$.
  Furthermore, since $x^*,x^k\in B(x^*,R)$ from Lemma \ref{swadgdbounds}, Lemma \ref{lipschitz2} in Appendix yields
  \begin{equation*}
    f(x^*)-f(x^k)\geq\langle\nabla f(x^k),x^*-x^k\rangle+\frac{1}{2L}\|\nabla f(x^k)\|^2.
  \end{equation*}
  Using this in \eqref{standard} instead of the inequality from the convexity of $f$, the following inequality holds;
  \begin{equation}\label{standard2}
    \|x^{k+1}-x^*\|^2\leq\|x^k-x^*\|^2-2t_k\left(f(x^k)-f(x^*)\right)-\frac{t_k}{L}\|\nabla f(x^k)\|^2+t_k^2\|\nabla f(x^k)\|^2.
  \end{equation}
  Adding this to equation \eqref{neq5} yields
  \begin{equation*}
    \begin{split}
      \|x^{k+1}-x^*\|^2+&\|x^{k+1}-x^k\|^2+2t_k(1+\theta_k)(f(x^k)-f_*)+\frac{t_k}{L}\|\nabla f(x^k)\|^2\\
      &\leq\|x^k-x^*\|^2+\|x^k-x^{k-1}\|^2+2t_k\theta_k(f(x^{k-1})-f_*)+\gamma_k.
    \end{split}
  \end{equation*}
  Taking the telescoping sum in the same manner as Lemma \ref{swadgdbounds}, the following inequality holds;
  \begin{equation*}
    \|x^{k+1}-x^*\|^2+\|x^{k+1}-x^k\|^2+2\sum_{i=1}^{k-1}b_i(f(x^i)-f_*)+\sum_{i=1}^k\frac{t_i}{L}\|\nabla f(x^i)\|^2\leq R^2,
  \end{equation*}
  where $b_i$ is the same as in \eqref{b}.
  At this time, since $b_i\geq0$ and $f(x^i)-f_*\geq0$ similarly to Lemma \ref{swadgdbounds},
  \begin{equation*}
    \sum_{i=1}^k\frac{t_i}{L}\|\nabla f(x^i)\|^2\leq R^2
  \end{equation*}
  holds.
  Therefore, taking the limit as $k\to\infty$, the following inequality holds;
  \begin{equation*}
    \sum_{i=1}^{\infty}\frac{t_i}{L}\|\nabla f(x^i)\|^2\leq R^2.
  \end{equation*}
  From this, $\lim_{i\to\infty}t_i\|\nabla f(x^i)\|^2=0$ holds.
  At this time, since $t_i\geq\min\left\{t_0,\frac{\delta}{\sqrt{2}L}\right\}>0$ from Lemma \ref{positive},
  \begin{equation*}
    \lim_{i\to\infty}\|\nabla f(x^i)\|=0
  \end{equation*}
  holds.
  
  For any accumulation point $\lim_{j\to\infty}x^{i_j}=\bar{x}$, we have $\lim_{j\to\infty}\|\nabla f(x^{i_j})\|=\|\nabla f(\bar{x})\|=0$, meaning that $\bar{x}$ is an optimal solution.
\end{proof}

Similarly, the stepsize interval for AdGD-2 can also be extended.
Let the stepsize $t_k$ be given as follows.
\begin{mybox}{Extension of the stepsize upper bound for AdGD-2}
  \begin{align}
    &l_k^{(2)}=\min\left\{\sqrt{\frac{2}{3}+\theta_{k-1}}t_{k-1},\quad\frac{\delta}{\sqrt{2}L_k}\right\}\label{wadgdstep1},\\
    &u_k^{(2)}=\min\left\{\sqrt{\frac{2}{3}+\theta_{k-1}}t_{k-1},\quad\sqrt{\frac{1+\frac{\gamma_k}{\|x^{k}-x^{k-1}\|^2}}{[2t_{k-1}^2L_k^2-1]_+}}t_{k-1}\right\}\label{wadgdstep2},\\
    &t_k\in\left[l_k^{(2)},u_k^{(2)}\right]\label{wadgdstep},
  \end{align}
  where $L_k$ is the same as in equation \eqref{locallipschiz} and $\theta_k$ is the same as in equation \eqref{theta}.
  In addition, $\delta\in(0,1)$, and $\{\gamma_k\}$ is a sequence of real numbers satisfying $\gamma_k\geq0$ and $\sum_{k=1}^{\infty}\gamma_k<\infty$.
\end{mybox}

Under these conditions, the following theorem holds.
\begin{theorem}\label{wadgd}
Let $f\colon\mathbb{R}^d\rightarrow\mathbb{R}$ be convex with locally Lipschitz gradient $\nabla f$.
Then, if the stepsize $t_k$ satisfies \eqref{wadgdstep}, the sequence $\{x^k\}$ is bounded, and its accumulation point is a solution.
The initial values are $t_0>0$, $\theta_0>0$, and $x^0\in\mathbb{R}^d$.
\end{theorem}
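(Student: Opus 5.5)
The plan is to copy the three-lemma structure used for Theorem \ref{swadgd}, with the AdGD-2 Lyapunov argument of \cite{malitsky2024} in place of the AdGD-1 one: (i) a one-step energy inequality analogous to Lemma \ref{neq1}, with an additive error $\gamma_k$; (ii) boundedness $x^k\in B(x^*,R)$ by telescoping, analogous to Lemma \ref{swadgdbounds}; (iii) a uniform positive lower bound on $t_k$, analogous to Lemma \ref{positive}; (iv) the concluding argument of Theorem \ref{swadgd}. That argument replaces convexity in \eqref{standard} by Lemma \ref{lipschitz2} on the ball, which yields $\sum_i t_i\|\nabla f(x^i)\|^2<\infty$, hence $\|\nabla f(x^i)\|\to0$ and optimality of every accumulation point.

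For step (i), first rewrite the upper bound \eqref{wadgdstep2}. When $[2t_{k-1}^2L_k^2-1]_+>0$ it is equivalent to
\begin{equation*}
t_k^2\left(2t_{k-1}^2L_k^2-1\right)\le t_{k-1}^2+\frac{t_{k-1}^2\gamma_k}{\|x^k-x^{k-1}\|^2}.
\end{equation*}
Because $\|x^k-x^{k-1}\|=t_{k-1}\|\nabla f(x^{k-1})\|$, the extra term equals $\gamma_k/\|\nabla f(x^{k-1})\|^2$. When the bracket is zero, the condition is vacuous, but then $2t_{k-1}^2L_k^2\le1$ and the original AdGD-2 estimate holds with no error term at all.

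Next, expand $\|x^{k+1}-x^k\|^2=t_k^2\|\nabla f(x^k)\|^2$ exactly as in \eqref{swadgdneq1}. Where the AdGD-2 proof in \cite{malitsky2024} bounds $t_k^2L_k^2\|x^k-x^{k-1}\|^2$ through its stepsize condition, substitute the inequality above. The only difference from the AdGD-2 estimate is then an additive term of order $\gamma_k$, up to a fixed constant factor: multiplying the displayed inequality by $\|\nabla f(x^{k-1})\|^2$ turns the $\gamma_k$ term into exactly $\gamma_k$. The result should be the AdGD-2 Lyapunov inequality of \cite{malitsky2024}, with whatever weights it puts on $\|x^{k+1}-x^k\|^2$ and on the function gaps, plus $c\gamma_k$ on the right-hand side.

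Step (ii) then goes as in Lemma \ref{swadgdbounds}. The other upper bound, $t_k\le\sqrt{2/3+\theta_{k-1}}\,t_{k-1}$, gives $t_k\theta_k\le(\tfrac23+\theta_{k-1})t_{k-1}$. This is exactly the inequality that makes the AdGD-2 telescoping coefficients nonnegative, and $\sum\gamma_k\le S$ absorbs the errors into $R^2$.

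Step (iii) is the main obstacle, because Case 1 of Lemma \ref{positive} no longer gives monotonicity: $t_k=\sqrt{2/3+\theta_{k-1}}\,t_{k-1}$ can be smaller than $t_{k-1}$ when $\theta_{k-1}<1/3$. Case 2 still gives $t_k\ge\delta/(\sqrt2L)$, because $\frac{\delta}{\sqrt2L_k}\le\frac{1}{\sqrt2L_k}\le u_k^{(2)}$ as noted in Section 3.1.

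For a run of consecutive Case 1 steps I would study the recursion $\theta_k=\sqrt{2/3+\theta_{k-1}}$. Its fixed point solves $\theta^2-\theta-\tfrac23=0$, giving $\theta^\ast>1$. Starting from any $\theta>0$, the iterates move monotonically toward $\theta^\ast$; in particular, from $\theta_{k-1}\ge0$ one gets $\theta_k\ge\sqrt{2/3}$ and then $\theta_{k+1}\ge\sqrt{2/3+\sqrt{2/3}}>1$. So only finitely many factors in a Case 1 run can be below $1$, and their product is bounded below by an absolute constant $c>0$ (roughly $\sqrt{2/3}$, depending on where the run starts). A run entered right after a Case 2 step has $\theta_{k-1}>0$, and the run's first factor is at least $\sqrt{2/3}$. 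Hence by induction
\begin{equation*}
t_k\ge c\min\left\{t_0,\frac{\delta}{\sqrt2L}\right\}>0
\end{equation*}
for an absolute $c>0$; this is where $\theta_0>0$ is used.

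With this lower bound, step (iv) finishes the proof verbatim as in Theorem \ref{swadgd}.
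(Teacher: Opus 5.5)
Your proposal is correct and follows essentially the same route as the paper's appendix proof: you rewrite the extended upper bound so that $\gamma_k$ enters the AdGD-2 one-step inequality additively (the paper gets weights $t_k(2+3\theta_k)$ and $3t_k\theta_k$), you telescope using $t_k\theta_k\le(\tfrac23+\theta_{k-1})t_{k-1}$, and you bound the stepsize below by the same Case 1/Case 2 analysis, with $\theta_{k-1}\ge\sqrt{2/3}$ inside a Case 1 run. Your run-based bound $\min\{\sqrt{2/3}\,t_0,\delta/(\sqrt3 L)\}$ is slightly more careful than the paper's $\min\{t_0,\delta/(\sqrt3L)\}$: the paper's base case uses $\theta_0=1/3$ even though the theorem only assumes $\theta_0>0$, whereas your argument needs only $\theta_0\ge0$.
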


Since the proof is almost the same as that for AdGD-1, it is provided in the appendix.

\section{Combination with Existing Stepsize Calculation Methods}
In Chapter 3, we derived the stepsize intervals that ensures convergence to a solution in Adaptive Gradient Descent.
Therefore, in this chapter, we consider a way to adopt stepsize calculation methods that are practical but have weak convergence theory, while ensuring their convergence by utilizing these intervals.

In this paper, we specifically propose a stepsize calculation method that utilizes the Barzilai-Borwein method (BB method).
The BB method was proposed by Barzilai and Borwein \cite{BB}.

The BB method is a variant of the quasi-Newton method\footnote{
  Quasi-Newton methods are a class of descent methods, but unlike the gradient descent method, the descent direction $d_k$ is set as $d_k=-B_k^{-1}\nabla f(x^k)$ or $d_k=-H_k\nabla f(x^k)$, where $B_k$ is an approximation matrix of the Hessian matrix $\nabla^2f(x^k)$ and $H_k=B_k^{-1}$.
  }
and generates the sequence $\{x^k\}$ using the following update formula;
\begin{align*}
  x^{k+1}=x^k-t_k B_k^{-1}\nabla f(x^k)\quad(\text{or } x^{k+1}=x^k-t_k H_k\nabla f(x^k)).
\end{align*}
In this case, the stepsize $t_k$ is chosen to minimize the function value at the next step, such as $t_k=\text{argmin}_{t>0}f(x^k+t d_k)$ (this is called a line search). Furthermore, the approximation matrix $B_k$ (or $H_k$) is given by $B_k=\lambda_k^{(1)}I$ ($H_k=\lambda_k^{(2)}I$), where
\begin{align*}
  \lambda_k^{(1)}&=\frac{(x^k-x^{k-1})^T(\nabla f(x^k)-\nabla f(x^{k-1}))}{\|x^k-x^{k-1}\|^2}\\
  \lambda_k^{(2)}&=\frac{(x^k-x^{k-1})^T(\nabla f(x^k)-\nabla f(x^{k-1}))}{\|\nabla f(x^k)-\nabla f(x^{k-1})\|^2}
\end{align*}

Here, if we consider the case without line search—that is, assuming the stepsize $t_k$ is absent—the BB method can be viewed as a gradient descent method with a stepsize of $\frac{1}{\lambda_k^{(1)}}$ (or $\lambda_k^{(2)}$).
For this BB method without line search, superlinear convergence has been proven under very strong assumptions if the objective function is a strictly convex quadratic function \cite{BB2}.
Therefore, although the convergence rate is fast, the class of objective functions for which convergence can be guaranteed is inherently very limited.

To resolve this issue, we consider incorporating the BB method into Adaptive Gradient Descent.
Specifically, using the stepsize interval that ensures global convergence considered in Chapter 3, we adopt BB stepsize $t_{BB}$ if it lies within that interval; otherwise, we adopt either the upper or lower bound.
By doing so, as proven in Chapter 3, global convergence can be guaranteed for objective functions that are locally smooth.

In this paper, we employ $\lambda_k^{(2)}$ as $t_{BB}$, which is the stepsize of the Barzilai-Borwein method.
Using this, we propose a new gradient descent method in which the stepsize is determined as follows.
\begin{mybox}{Extended BB\_AdGD-1}
\begin{align}
  &t_{BB}=\frac{(x^k-x^{k-1})^T(\nabla f(x^k)-\nabla f(x^{k-1}))}{\|\nabla f(x^k)-\nabla f(x^{k-1})\|^2}\label{bbstep},\\
  &t_k=\text{mid}\left\{t_{BB},l_k^{(1)},u_k^{(1)}\right\}\label{bbadgd1},
\end{align}
where $l_k^{(1)}$ and $u_k^{(1)}$ are the same as defined in equations \eqref{swadgdstep1} and \eqref{swadgdstep2}.
In addition, $\text{mid}\{\cdot\}$ denotes the median.
\end{mybox}
We refer to this method as Extended BB\_AdGD-1.

At this time, the value taken by $t_k$ can be classified into the following three cases depending on the order of $t_{BB}$, $l_k^{(1)}$ and $u_k^{(1)}$.
\begin{enumerate}
  \item In the case of $t_{BB}\leq l_k^{(1)}\leq u_k^{(1)}$, we have $t_k=l_k^{(1)}$, it clearly holds that $t_k\in[l_k^{(1)},u_k^{(1)}]$.
  \item In the case of $l_k^{(1)}\leq t_{BB}\leq u_k^{(1)}$, we have $t_k=t_{BB}$, which clearly implies $t_k\in[l_k^{(1)},u_k^{(1)}]$ under this assumption.
  \item In the case of $l_k^{(1)}\leq u_k^{(1)}\leq t_{BB}$, we have $t_k=u_k^{(1)}$, it clearly holds that $t_k\in[l_k^{(1)},u_k^{(1)}]$.
\end{enumerate}

Therefore, $t_k\in[l_k^{(1)},u_k^{(1)}]$ holds. Consequently, from Theorem \ref{swadgd}, assuming only the convexity and local L-smoothness of the objective function, the accumulation point of the sequence of iterations $\{x^k\}$ generated by Extended BB\_AdGD-1 is an optimal solution.
That is, unlike the standard BB method, it can be seen that convergence can be guaranteed as long as the function is locally smooth, even if it is not globally smooth.

Similarly, BB method can be combined with AdGD-2.
\begin{mybox}{Extended BB\_AdGD-2}
\begin{equation}\label{bbadgd2}
  t_k=\text{mid}\left\{t_{BB},l_k^{(2)},u_k^{(2)}\right\}
\end{equation}
where $l_k^{(2)}$ and $u_k^{(2)}$ are the same as defined by \eqref{wadgdstep1} and \eqref{wadgdstep2}, and $t_{BB}$ is the same as defined in equation \eqref{bbstep}.
In addition, $\text{mid}$ denotes the median.
\end{mybox}
We refer to this method as Extended BB\_AdGD-2.

Reasoning in the same manner as for Extended BB\_AdGD-1 yields $t_k\in[l_k^{(2)},u_k^{(2)}]$.
Therefore, from Theorem \ref{wadgd}, assuming only that the objective function is convex and locally smooth, the accumulation point of the sequence of iterations $\{x^k\}$ generated by Extended BB\_AdGD-2 is an optimal solution.

\section{Numerical Experiments}

In this section, we conduct numerical experiments to compare the performance of the proposed methods with the original adaptive gradient descent methods.
The numerical experiments were performed on a computer with macOS Sequoia 15.6, an Apple M3 CPU, and 24 GB of memory, and the algorithms were implemented using Python 3.14.1.

Specifically, we compare the following six methods:
\begin{itemize}
  \item AdGD-1
  \item AdGD-2
  \item A method that adopts the stepsize of Extended BB\_AdGD-1 \eqref{bbadgd1} with $\gamma_k=0$ (denoted as BB\_AdGD-1)
  \item A method that adopts the stepsize of Extended BB\_AdGD-2 \eqref{bbadgd2} with $\gamma_k=0$ (denoted as BB\_AdGD-2)
  \item Extended BB\_AdGD-1 (denoted as BB\_exAdGD-1)
  \item Extended BB\_AdGD-2 (denoted as BB\_exAdGD-2)
\end{itemize}
Here, BB\_AdGD-1 and BB\_AdGD-2 are methods that combine the stepsize interval before extending the upper bound with the BB method in the same manner as in Section 4.

The comparison is focused on the following three aspects:
\begin{itemize}
  \item Simple comparison of the six methods.
  \item Comparison based on the sequence $\gamma_k$ that extends the upper bound.
  \item Comparison based on the parameter $\delta$ that sets the lower bound.
\end{itemize}

\begin{description}
  \item[1. Simple Comparison of Methods] 
  We compare the six methods mentioned above by setting the sequence $\{\gamma_k\}$ as $\displaystyle\gamma_k=\frac{\|x^1-x^0\|}{k^2}$ and fixing the parameter $\delta$ to $\delta=0.5$.
  \item[2. Comparison of the Sequence $\{\gamma_k\}$] 
  For Extended BB\_AdGD-1, we compare the 4 cases about the sequence $\{\gamma_k\}$; 
  \begin{equation*}
    \gamma_k^1=
    \begin{cases}
      \|x^1-x^0\|                    & \text{if $k=1$,} \\
      \frac{\|x^1-x^0\|}{k(\log k)^2} & \text{if $k\geq2$,} 
    \end{cases}
  \end{equation*}
  $\displaystyle\gamma_k^2=\frac{\|x^1-x^0\|}{k^2}$, $\displaystyle\gamma_k^3=\frac{\|x^1-x^0\|}{k^3}$, and $\displaystyle\gamma_k^4=\|x^1-x^0\|\left(\frac{3}{4}\right)^{k-1}$.

  (Parameter $\delta=0.5$.)
  \item[3. Comparison of the Parameter $\delta$] 
  For Extended BB\_AdGD-1, we compare the results by varying the parameter $\delta$ from $0.1$ to $1$ in increments of $0.05$.
  At this time, the sequence $\gamma_k$ is $\displaystyle\gamma_k=\frac{\|x^1-x^0\|}{k^2}$.
\end{description}

In these settings, $\gamma_k>0$ and $\sum_{i=1}^{\infty}\gamma_k<\infty$ \cite{mathar2009} hold.

Regarding the initial values, the initial stepsize is set to $t_0=1$.
For the initial value of $\theta$, we set $\theta_0=0$ for AdGD-1, BB\_AdGD-1 and Extended BB\_AdGD-1, and $\theta_0=\frac{1}{3}$ for the other methods.

The stopping criterion for each algorithm is that the relative error becomes smaller than $10^{-10}$, i.e., $\frac{\|\nabla f(x^k)\|}{\|\nabla f(x^0)\|}<10^{-10}$.

\paragraph{Logistic Regression}
As the objective function, we consider the logistic loss with $l_2$-regularization;
\begin{equation*}
  f(x)=-\frac{1}{n}\sum_{i=1}^n\left(y_i\log(s(a_i^Tx))+(1-y_i)\log(1-s(a_i^Tx))\right)+\frac{\lambda}{2}\|x\|^2,
\end{equation*}
where $x\in\mathbb{R}^d$, $a_i\in\mathbb{R}^d$, $y_i\in\{0,1\}$, and $s(z)$ is the sigmoid function $s(z)=\frac{1}{1+\exp(-z)}$.
The presence of the regularization term $\frac{\lambda}{2}\|x\|^2$ ensures that $f$ is a strongly convex function when $\lambda>0$.
In this experiments, we set $\lambda$ as $\displaystyle\lambda=\frac{L}{n}$, where $L$ is Lipshitz constant, $L =\frac{\lambda_{\max}(A^TA)}{4n}$ ($A=(a_{ij})$).

In this experiment, we used the "coil2000" dataset from OpenML for $a_i$ and $y_i$.
The dimension of the data is $d=86$ and the number of data points is $n=9822$.
The initial value of the iterations was set to $x^0=(1,1,\cdots,1)$.

\paragraph{Experimental Results}
The results are shown in Figure \ref{logerror1}, \ref{logerror2} and Table \ref{logtable}.

\begin{figure}[htbp]
\centering
\includegraphics[width=0.5\columnwidth]{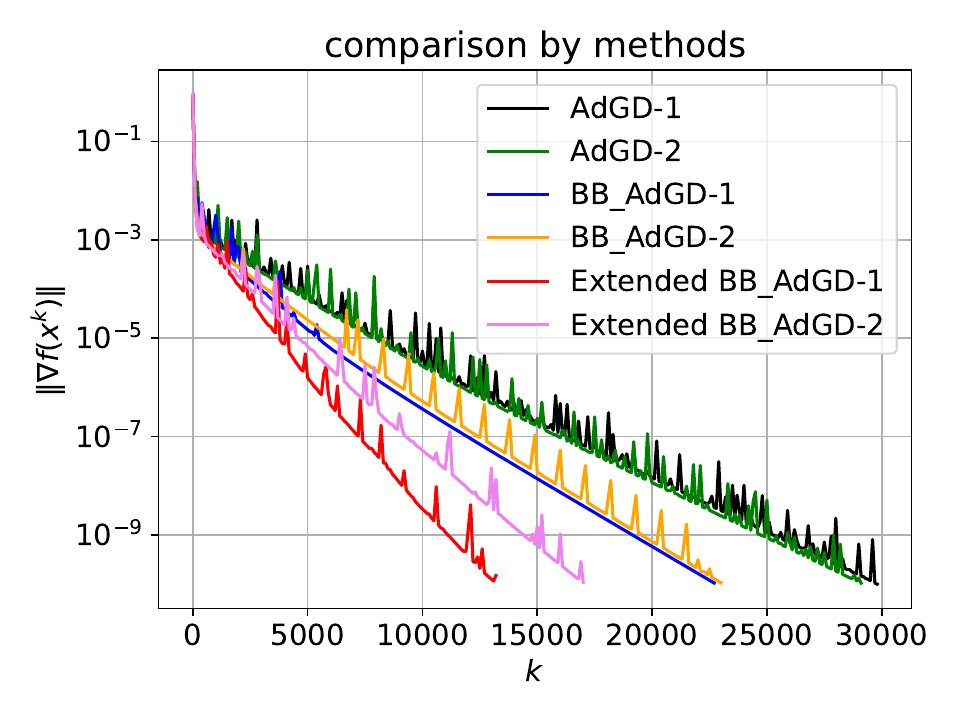}
\caption{Comparison by methods. Horizontal axis means the number of steps and vertical axis means the relative error $\frac{\|\nabla f(x^k)\|}{\|\nabla f(x^0)\|}$.}
\label{logerror1}
\end{figure}

\begin{figure}[htbp]
\centering
\begin{minipage}[b]{0.49\columnwidth}
    \centering
    \includegraphics[width=0.9\columnwidth]{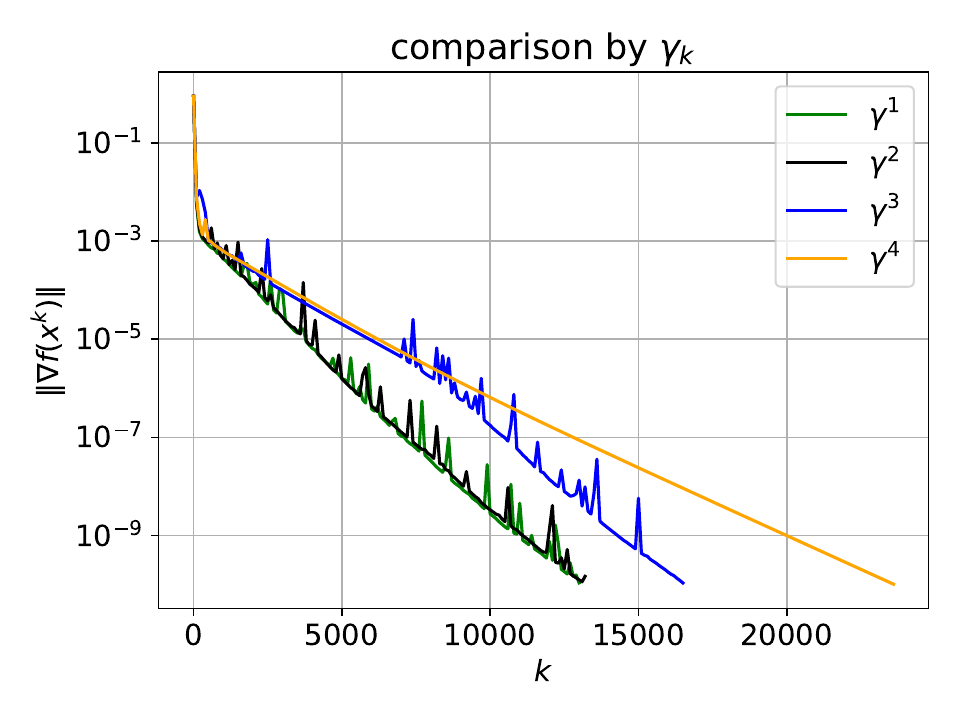}
\end{minipage}
\begin{minipage}[b]{0.49\columnwidth}
    \centering
    \includegraphics[width=0.9\columnwidth]{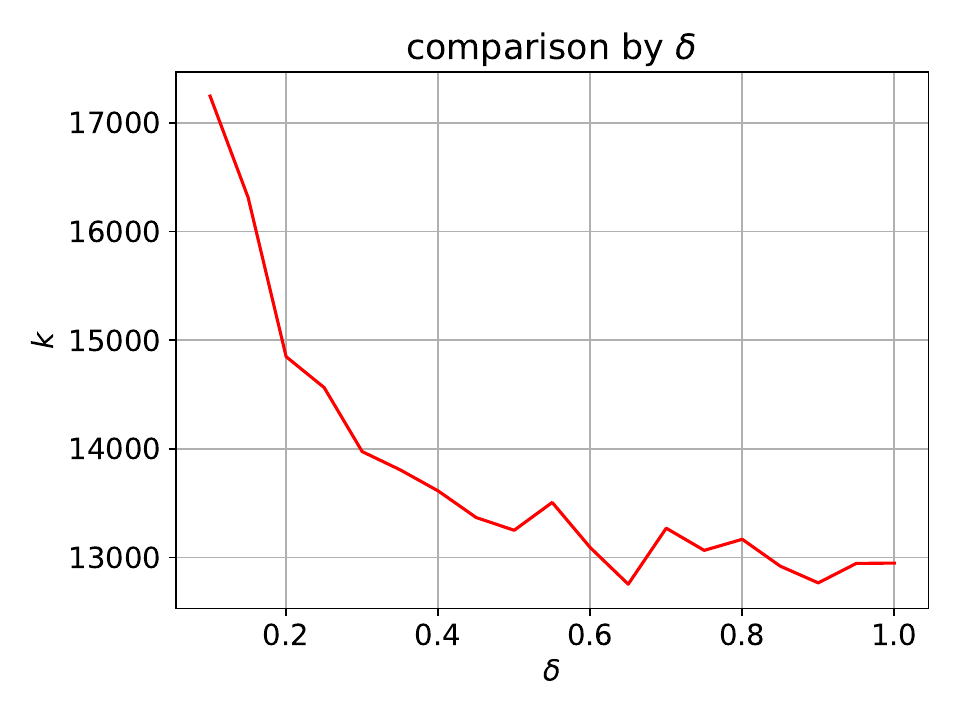}
\end{minipage}
\caption{Left shows comparison by $\gamma_k$, and right shows comparison by $\delta$. Horizontal axis means the number of steps and vertical axis means the relative error $\frac{\|\nabla f(x^k)\|}{\|\nabla f(x^0)\|}$.}
\label{logerror2}
\end{figure}

\begin{table}[h]
  \centering
  \caption{The number of steps until completion, the proportion of each type of stepsize adopted (Case 1 to Case 4), and the computation time until completion (s) for each method.
  Case 1 represents the case where $t_k=l_k=u_k$, Case 2 represents $t_k=l_k$ excluding Case 1, Case 3 represents $t_k=t_{BB}$ excluding Case 1, and Case 4 represents $t_k=u_k$ excluding Case 1.}
  \label{logtable}
  \begin{tabular}{|l|r|r|r|r|r|r|} \hline
    Method & Total Steps & Case.1 & Case.2 & Case.3 & Case.4 & Time (s)\\ \hline
    AdGD-1 & 29800 & - & - & - & - & 6.81 \\
    AdGD-2 & 29189 & - & - & - & - & 6.77 \\
    BB\_AdGD-1 & 22779 & 0.674 & 0.108 & 0.091 & 0.125 & 5.38 \\
    BB\_AdGD-2 & 23082 & 0.682 & 0.096 & 0.046 & 0.175 & 5.41 \\
    Extended BB\_AdGD-1 & 13250 & 0.620 & 0.140 & 0.219 & 0.021 & 3.10 \\
    Extended BB\_AdGD-2 & 17056 & 0.697 & 0.107 & 0.142 & 0.054 & 4.33 \\ \hline
  \end{tabular}
\end{table}

From the results, it can be seen that Extended BB\_AdGD-1 requires the fewest number of steps to converge and achieves the shortest computation time.
Furthermore, it is observed that Extended BB\_AdGD-1 has the highest proportion of adopting the BB stepsize.
This means Extended BB\_AdGD-1 and Extended BB\_AdGD-2 are more likely to adopt the BB stepsize than BB\_AdGD-1 and BB\_AdGD-2.

Moreover, it can be seen that the larger the sequence $\gamma_k$, the smaller the number of iterations required for convergence in Extended BB\_AdGD-1.
Regarding the parameter $\delta$, it can be observed that an excessively small value increases the number of iterations required for convergence, while sufficiently large values yield almost no change in the iteration count.
Since a higher proportion of adopting the BB method corresponds to a smaller number of steps to convergence, these results suggest that extending the stepsize upper bound is indeed meaningful and effective.

\section{Conclusion and Perspective}
In this paper, we derived a stepsize interval that guarantees the convergence of Adaptive Gradient Descent to a solution, and demonstrated that its upper bound can be further extended.
By utilizing this extended interval, we proposed a novel method that integrates heuristic approaches—such as BB method, which yields promising numerical results despite lacking theoretical convergence guarantees.
To validate the effectiveness of our proposed method, we conducted numerical experiments using logistic loss as objective functions.
Through these experiments, we compared the convergence rates against the original Adaptive Gradient Descent and evaluated the impact of the extended upper bound.
The results indicate that incorporating the heuristic approach accelerates the convergence speed compared to using Adaptive Gradient Descent alone.

For future work, while this study assumed the objective function to be a locally smooth convex function, general functions are often non-convex or non-differentiable. 
Therefore, we aim to investigate whether our approach can be extended to handle such objective functions.
Additionally, although the Euclidean norm was employed throughout this paper, exploring whether the algorithm can be accelerated by adopting other norms remains an important avenue for future research.

\appendix

\section{Basic Mathematical Properties Used in This Paper}
This section outlines the mathematical properties regarding convex functions and Lipschitz continuity utilized in this paper.

\begin{lemma}((3.2) in \cite{textbook})\label{convex1}
Let $f\colon\mathbb{R}^n\to\mathbb{R}$ be a function whose effective domain $\text{dom}f$ is an open convex set.
When $f$ is differentiable on $\text{dom}f$, a necessary and sufficient condition for $f$ to be convex is that the following inequality holds for any $x,y\in\text{dom}f$ such that $x\neq y$:
\begin{equation*}
f(y)-f(x)\geq\langle\nabla f(x),y-x\rangle.
\end{equation*}\qed
\end{lemma}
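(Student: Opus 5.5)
This is the standard first-order characterization of convexity, which the paper cites from the textbook. I would prove the two directions separately, working throughout inside the open convex set $\text{dom}f$.

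\emph{Necessity.} Suppose $f$ is convex and fix $x,y\in\text{dom}f$ with $x\neq y$. For $t\in(0,1]$ the point $x+t(y-x)$ lies in $\text{dom}f$ by convexity of the domain. Convexity of $f$ gives
\[
f(x+t(y-x))\leq(1-t)f(x)+tf(y).
\]
Rearranging, this reads
\[
\frac{f(x+t(y-x))-f(x)}{t}\leq f(y)-f(x).
\]
As $t\downarrow0$, the left-hand side tends to the directional derivative $\langle\nabla f(x),y-x\rangle$, since $f$ is differentiable at $x$, which is an interior point of the domain. This yields the inequality.

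\emph{Sufficiency.} Assume the gradient inequality holds for all distinct $x,y\in\text{dom}f$; when $x=y$ it holds trivially. Take $x,y\in\text{dom}f$ and $\lambda\in[0,1]$, and set $z=\lambda x+(1-\lambda)y\in\text{dom}f$. Applying the hypothesis at the base point $z$ twice gives
\[
f(x)\geq f(z)+\langle\nabla f(z),x-z\rangle,\qquad f(y)\geq f(z)+\langle\nabla f(z),y-z\rangle.
\]
Multiply the first by $\lambda$ and the second by $1-\lambda$, then add. The inner-product terms combine to $\langle\nabla f(z),\lambda x+(1-\lambda)y-z\rangle=0$, which leaves $\lambda f(x)+(1-\lambda)f(y)\geq f(z)$. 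This is exactly convexity of $f$.

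The only delicate point is justifying the limit in the necessity direction. There I would use that differentiability on an open set gives
\[
f(x+h)=f(x)+\langle\nabla f(x),h\rangle+o(\|h\|),
\]
and then take $h=t(y-x)$. The degenerate cases $x=z$ or $y=z$ in the sufficiency argument are harmless, because the inequality is trivially an equality when the two points coincide.
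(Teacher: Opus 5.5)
Your proof is correct: the difference-quotient limit $t\downarrow 0$ for necessity and the averaging of the two supporting inequalities at $z=\lambda x+(1-\lambda)y$ for sufficiency are exactly the standard argument. The paper gives no proof of its own and only cites the textbook. The same two steps appear in the standard textbook proof, often after first restricting $f$ to the line through $x$ and $y$; your argument works directly in $\mathbb{R}^n$ and does not need that reduction.
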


\begin{lemma}(Lemma 1 in \cite{malitsky2024})\label{norm1}
Let $\{x^k\}$ be the sequence generated by applying the gradient descent method to the unconstrained convex optimization problem \eqref{opt1}. Then, the following inequality holds:
\begin{equation*}
\langle\nabla f(x^k),\nabla f(x^{k-1})\rangle\leq|\nabla f(x^{k-1})|^2.
\end{equation*}\qed
\end{lemma}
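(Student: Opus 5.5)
The plan is to derive the inequality from monotonicity of the gradient of a convex function, combined with the gradient step $x^k-x^{k-1}=-t_{k-1}\nabla f(x^{k-1})$ from \eqref{gd}, where $t_{k-1}>0$. Here $|\cdot|$ in the statement is read as the Euclidean norm $\|\cdot\|$.

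\emph{Step 1: monotonicity.} First I would establish that
\[
\langle\nabla f(x)-\nabla f(y),x-y\rangle\geq0
\]
for all $x,y$. To get it, I apply Lemma \ref{convex1} twice (with $\mathrm{dom}f=\mathbb{R}^d$, which is open and convex):
\[
f(y)-f(x)\geq\langle\nabla f(x),y-x\rangle,\qquad f(x)-f(y)\geq\langle\nabla f(y),x-y\rangle.
\]
Adding the two inequalities gives monotonicity. The case $x=y$, which Lemma \ref{convex1} formally excludes, is trivial since the inner product is then $0$.

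\emph{Step 2: substitute the iterates.} Next I take $x=x^k$ and $y=x^{k-1}$, and substitute $x^k-x^{k-1}=-t_{k-1}\nabla f(x^{k-1})$. This yields
\[
-t_{k-1}\left\langle\nabla f(x^k)-\nabla f(x^{k-1}),\nabla f(x^{k-1})\right\rangle\geq0.
\]
Dividing by $t_{k-1}>0$ and expanding the inner product gives
\[
\langle\nabla f(x^k),\nabla f(x^{k-1})\rangle\leq\|\nabla f(x^{k-1})\|^2,
\]
which is the claim.

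\emph{Degenerate case.} If $x^k=x^{k-1}$, then $\nabla f(x^{k-1})=0$, because $t_{k-1}>0$. Both sides of the claimed inequality are then $0$, so it holds trivially.

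There is no real obstacle. The only points needing care are the degenerate case $x^k=x^{k-1}$ and the positivity of $t_{k-1}$. Positivity holds for every scheme in the paper: the stepsize is a minimum of positive quantities, and in the setting of Lemma \ref{positive} it is bounded below by $\min\{t_0,\delta/(\sqrt{2}L)\}$.
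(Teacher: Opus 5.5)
Your proof is correct and is the standard one-line argument behind the cited result; the paper gives no proof of its own, only the citation to Malitsky and Mishchenko. That argument is monotonicity of $\nabla f$, obtained by adding two instances of Lemma~\ref{convex1} and applied with $x^k-x^{k-1}=-t_{k-1}\nabla f(x^{k-1})$, $t_{k-1}>0$. One small remark: justify $t_{k-1}>0$ only by induction from $l_k>0$ (your ``minimum of positive quantities'' point), not by Lemma~\ref{positive}, since that lemma is proved downstream of this one and citing it here would be circular.
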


\begin{lemma}(Theorem 2.1.5 in \cite{Nesterov2013})\label{lipschitz2}
Let $\mathcal{C}\subset\mathbb{R}^d$ be a closed convex set.
If a function $f\colon\mathcal{C}\to\mathbb{R}$ is convex and $L$-smooth, then for any $x,y\in\mathcal{C}$,
\begin{equation*}
f(x)-f(y)-\langle\nabla f(y),x-y\rangle\geq\frac{1}{2L}|\nabla f(x)-\nabla f(y)|^2.
\end{equation*}\qed
\end{lemma}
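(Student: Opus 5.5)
The plan is the standard ``shifted function plus descent lemma'' argument, with one extra step to handle the restriction to $\mathcal{C}$. Fix $y\in\mathcal{C}$ and define
\[
\varphi(z)=f(z)-\langle\nabla f(y),z\rangle .
\]
This function is convex, and $\nabla\varphi(z)=\nabla f(z)-\nabla f(y)$ is $L$-Lipschitz. Also $\nabla\varphi(y)=0$, so by Lemma \ref{convex1} the point $y$ is a global minimizer of $\varphi$ wherever $\varphi$ is defined. The target inequality is then exactly
\[
\varphi(x)-\varphi(y)\ge\tfrac{1}{2L}\|\nabla\varphi(x)\|^2 .
\]

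To get this, I first prove the descent lemma for $\varphi$:
\[
\varphi(w)\le\varphi(x)+\langle\nabla\varphi(x),w-x\rangle+\tfrac{L}{2}\|w-x\|^2 .
\]
It follows from $\varphi(w)-\varphi(x)=\int_0^1\langle\nabla\varphi(x+s(w-x)),w-x\rangle\,ds$ together with Cauchy--Schwarz and the Lipschitz bound. It holds for any $w$ such that the segment $[x,w]$ lies where $\nabla\varphi$ is $L$-Lipschitz. Taking $w=x-\frac1L\nabla\varphi(x)$ gives $\varphi(w)\le\varphi(x)-\frac{1}{2L}\|\nabla\varphi(x)\|^2$. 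Combined with $\varphi(y)\le\varphi(w)$ from the minimality of $y$, this is the claim.

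The main obstacle is that $\mathcal{C}$ is only a closed convex set, such as the ball $B(x^*,R)$ used in Theorem \ref{swadgd}. The point $w=x-\frac1L\nabla\varphi(x)$ may leave $\mathcal{C}$, and then neither the descent lemma nor $\varphi(y)\le\varphi(w)$ is justified. If $\mathcal{C}=\mathbb{R}^d$ there is no issue and the argument above is complete. For a proper subset, I would first replace $f$ by a convex function $F\colon\mathbb{R}^d\to\mathbb{R}$ that agrees with $f$ and $\nabla f$ on $\mathcal{C}$ and whose gradient is globally $L$-Lipschitz. Such an extension with the same constant exists by the $C^{1,1}$ convex extension theorem (Azagra--Mudarra); I would invoke it rather than reprove it. Running the argument for $F$ then yields the inequality for all $x,y\in\mathcal{C}$, since both sides only involve values and gradients on $\mathcal{C}$.

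If I want to avoid the extension theorem, the fallback is a shorter step $w=x-s\nabla\varphi(x)$ with $s\le 1/L$ chosen so that $w\in\mathcal{C}$. This only gives a weaker constant, so to get exactly $\frac{1}{2L}$ I would rely on the extension.
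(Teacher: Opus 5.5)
\textbf{There is a genuine gap: the extension step for a proper $\mathcal{C}$ is circular, and the statement is false in that generality.}

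Your argument for $\mathcal{C}=\mathbb{R}^d$ is the textbook proof of Nesterov's Theorem 2.1.5. That theorem is all the paper offers, as a citation, and it is stated for functions on the whole space.

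The problem is the reduction when $\mathcal{C}$ is a proper closed convex set. The Azagra--Mudarra theorem does not say that every convex function with $L$-Lipschitz gradient on a convex set extends with the same constant. Its criterion for a convex extension $F$ with $\mathrm{Lip}(\nabla F)\le L$ is exactly $f(x)\ge f(y)+\langle\nabla f(y),x-y\rangle+\frac{1}{2L}\|\nabla f(x)-\nabla f(y)\|^2$ on $\mathcal{C}$. Conversely, any such extension forces this inequality by the whole-space case. So the step assumes its conclusion.

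It cannot be repaired, because the statement is false for general $\mathcal{C}$. Take $L=1$, $\mathcal{C}=[0,1]\times[-\tfrac{1}{20},\tfrac{1}{20}]$ and
\[
f(u,v)=\frac{u^2}{6}+\frac{u^3}{18}+\frac{9}{20}uv+\frac{(2-u)v^2}{6}.
\]
Its Hessian has trace $1$ and determinant $\frac{(1+u)(2-u)}{9}-\bigl(\frac{9}{20}-\frac{v}{3}\bigr)^2\ge\frac{2}{9}-\frac{49}{225}>0$ on $\mathcal{C}$. Hence $0\preceq\nabla^2 f\preceq I$ there, and $f$ is convex with $1$-Lipschitz gradient on $\mathcal{C}$. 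Now take $y=(0,0)$ and $x=(1,0)$. The left side is $f(x)-f(y)-\langle\nabla f(y),x-y\rangle=\frac{2}{9}$. Since $\nabla f(x)-\nabla f(y)=(\frac12,\frac{9}{20})$, the right side is $\frac{181}{800}>\frac{2}{9}$. The thin domain lets the gradient turn sideways faster than any globally $1$-smooth convex function allows.

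Your fallback does not help either. The admissible step $s$ depends on where $x$ sits relative to $\partial\mathcal{C}$, and it is $0$ when $-\nabla\varphi(x)$ points outward. So no constant depending on $L$ alone comes out.

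What is true, and what the paper actually needs, relies on $f$ being convex on all of $\mathbb{R}^d$ in its convergence theorems. For $x,y\in B(x^*,R)$, run your whole-space argument with $L$ the Lipschitz constant of $\nabla f$ on the larger ball $B(x^*,3R)$. Then $y$ minimizes $\varphi$ over all of $\mathbb{R}^d$ by global convexity. Also $\|w-x\|=\frac1L\|\nabla f(x)-\nabla f(y)\|\le\|x-y\|\le 2R$, so the descent lemma is only used on the segment $[x,w]\subset B(x^*,3R)$. The lemma should be stated in this form, or for $\mathcal{C}=\mathbb{R}^d$, not for an arbitrary closed convex set.
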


\section{Proof of Theorem \ref{wadgd}}
First, we show that an inequality containing $\|x^k-x^*\|^2$ holds, from which a telescoping sum with respect to $k$ can be taken.

\begin{lemma}
If the stepsize $t_k$ satisfies \eqref{swadgdstep}, the sequence $\{x^k\}$, a solution $x^*$, and the optimal value $f_*$ satisfy the following inequality;
\begin{equation}\label{neq4}
  \begin{split}
    \|x^{k+1}-x^*\|^2+&\|x^{k+1}-x^k\|^2+t_k(2+3\theta_k)(f(x^k)-f_*)\\
    &\leq\|x^k-x^*\|^2+\|x^k-x^{k-1}\|^2+3t_k\theta_k(f(x^{k-1})-f_*)+\gamma_k
  \end{split}
\end{equation}
\end{lemma}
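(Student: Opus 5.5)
The plan is to follow the proof of Lemma \ref{neq1} line by line. The only change is how the term $t_k^2L_k^2\|x^k-x^{k-1}\|^2$ is handled, since the AdGD-2 upper bound \eqref{wadgdstep2} controls $t_k^2[2t_{k-1}^2L_k^2-1]_+$ rather than $t_k^2L_k^2$. (I read the hypothesis of the lemma as \eqref{wadgdstep}, i.e.\ $t_k\in[l_k^{(2)},u_k^{(2)}]$, which is clearly what is intended.)

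Write $g_j=\nabla f(x^j)$. As in \eqref{swadgdneq1},
\[
\|x^{k+1}-x^k\|^2=t_k^2\|g_k-g_{k-1}\|^2-t_k^2\|g_{k-1}\|^2+2t_k^2\langle g_k,g_{k-1}\rangle .
\]
Here $\|g_k-g_{k-1}\|^2=L_k^2\|x^k-x^{k-1}\|^2=L_k^2t_{k-1}^2\|g_{k-1}\|^2$ by \eqref{gd} and \eqref{locallipschiz}.

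The key step is the splitting
\[
t_k^2L_k^2t_{k-1}^2\|g_{k-1}\|^2=\tfrac12 t_k^2\bigl(2t_{k-1}^2L_k^2-1\bigr)\|g_{k-1}\|^2+\tfrac12 t_k^2\|g_{k-1}\|^2 .
\]
I bound the first piece by cases. If $2t_{k-1}^2L_k^2-1\le 0$, it is nonpositive and can be dropped. Otherwise, squaring the second entry of \eqref{wadgdstep2} gives
\[
t_k^2(2t_{k-1}^2L_k^2-1)\le t_{k-1}^2\Bigl(1+\frac{\gamma_k}{\|x^k-x^{k-1}\|^2}\Bigr).
\]
Since $t_{k-1}^2\|g_{k-1}\|^2=\|x^k-x^{k-1}\|^2$, the first piece is then at most $\tfrac12\|x^k-x^{k-1}\|^2+\tfrac12\gamma_k$. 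The degenerate case $x^k=x^{k-1}$ is trivial, because then $g_k=g_{k-1}$ and everything is handled directly. In all cases,
\[
\|x^{k+1}-x^k\|^2\le \tfrac12\|x^k-x^{k-1}\|^2+\tfrac{\gamma_k}{2}-\tfrac12t_k^2\|g_{k-1}\|^2+2t_k^2\langle g_k,g_{k-1}\rangle .
\]

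Next I apply Lemma \ref{norm1} in the form $-\tfrac12\|g_{k-1}\|^2\le-\tfrac12\langle g_k,g_{k-1}\rangle$, which leaves $\tfrac32 t_k^2\langle g_k,g_{k-1}\rangle$. Using $t_k^2 g_{k-1}=t_k\theta_k(x^{k-1}-x^k)$ and Lemma \ref{convex1}, this is at most $\tfrac32 t_k\theta_k\bigl(f(x^{k-1})-f(x^k)\bigr)$. Multiplying by $2$ and rearranging gives the AdGD-2 analogue of \eqref{neq5}:
\[
\|x^{k+1}-x^k\|^2\le\|x^k-x^{k-1}\|^2-\|x^{k+1}-x^k\|^2+3t_k\theta_k\bigl(f(x^{k-1})-f(x^k)\bigr)+\gamma_k .
\]

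Finally I add this to \eqref{standard}. The term $t_k^2\|g_k\|^2$ in \eqref{standard} equals $\|x^{k+1}-x^k\|^2$ and cancels against $-\|x^{k+1}-x^k\|^2$. Writing $f(x^{k-1})-f(x^k)=(f(x^{k-1})-f_*)-(f(x^k)-f_*)$ then collects the coefficient $2t_k+3t_k\theta_k=t_k(2+3\theta_k)$ on $f(x^k)-f_*$, which yields \eqref{neq4}.

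The main obstacle is choosing the $\tfrac12$–$\tfrac12$ split so that the $[\cdot]_+$ bound absorbs exactly $\tfrac12\|x^k-x^{k-1}\|^2+\tfrac12\gamma_k$. The leftover $-\tfrac12t_k^2\|g_{k-1}\|^2$ must then combine with $2t_k^2\langle g_k,g_{k-1}\rangle$ via Lemma \ref{norm1} to produce the constant $3$. This $3$ is what makes the first entry $\sqrt{2/3+\theta_{k-1}}$ of the interval the right one for the later telescoping step.
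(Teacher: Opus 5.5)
Your proof is correct and is essentially the paper's argument. The paper writes the same splitting as $\bigl(t_k^2L_k^2-\frac{t_k^2}{2t_{k-1}^2}\bigr)\|x^k-x^{k-1}\|^2-\frac{t_k^2}{2}\|\nabla f(x^{k-1})\|^2+2t_k^2\langle\nabla f(x^k),\nabla f(x^{k-1})\rangle$, bounds the bracket with the second entry of the AdGD-2 upper bound, and then uses Lemma \ref{norm1} and convexity exactly as you do to obtain the constant $3$. Reading the hypothesis as the AdGD-2 interval is correct, and your explicit handling of the cases $[2t_{k-1}^2L_k^2-1]_+=0$ and $x^k=x^{k-1}$ (left implicit in the paper) is a sound refinement.
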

\begin{proof}
Similarly to lemma \ref{neq1}
\begin{align*}
    \|x^{k+1}-x^k\|^2&=t_k^2L_k^2\|x^k-x^{k-1}\|^2-t_k^2\|\nabla f(x^{k-1})\|^2+2t_k^2\left\langle\nabla f(x^k),\nabla f(x^{k-1})\right\rangle\\
    &=\left(t_k^2L_k^2-\frac{t_k^2}{2t_{k-1}^2}\right)\|x^k-x^{k-1}\|^2-\frac{t_k^2}{2}\|\nabla f(x^{k-1})\|^2+2t_k^2\left\langle\nabla f(x^k),\nabla f(x^{k-1})\right\rangle.
\end{align*}
From \eqref{wadgdstep2} and \eqref{wadgdstep}, we obtain
\[
t_k\leq\sqrt{\frac{1+\frac{\gamma_k}{\|x^{k}-x^{k-1}\|^2}}{[2t_{k-1}^2L_k^2-1]_+}}t_{k-1}.
\]
Squaring both sides and rearranging, we obtain
\begin{equation*}
    t_k^2L_k^2-\frac{t_k^2}{2t_{k-1}^2}\leq\frac{1}{2}\left(1+\frac{\gamma_k}{\|x^{k}-x^{k-1}\|^2}\right).
\end{equation*}
Using this and Lemma \ref{norm1},
\begin{align*}
    \|x^k-x^{k-1}\|^2&\leq\frac{1}{2}\|x^k-x^{k-1}\|^2+\frac{\gamma_k}{2}-\frac{t_k^2}{2}\|\nabla f(x^{k-1})\|^2+2t_k^2\left\langle\nabla f(x^k),\nabla f(x^{k-1})\right\rangle\\
    &\leq\frac{1}{2}\|x^k-x^{k-1}\|^2+\frac{3}{2}t_k^2\left\langle\nabla f(x^k),\nabla f(x^{k-1})\right\rangle+\frac{\gamma_k}{2}\\
    &=\frac{1}{2}\|x^k-x^{k-1}\|^2+\frac{3}{2}t_k\theta_k\left\langle\nabla f(x^k),x^{k-1}-x^k\right\rangle+\frac{\gamma_k}{2}\\
    &\leq\frac{1}{2}\|x^k-x^{k-1}\|^2+\frac{3}{2}t_k\theta_k\left(f(x^{k-1})-f(x^k)\right)+\frac{\gamma_k}{2},
\end{align*}
where the last inequality uses the fact that Lemma \ref{convex1} in the appendix holds for $x^k$ and $x^*$ due to the convexity of $f$.

Multiplying this by 2 and rearranging terms yields
\begin{equation}
  \|x^{k+1}-x^k\|^2\leq\|x^k-x^{k-1}\|^2-\|x^{k+1}-x^k\|^2+3t_k\theta_k(f(x^{k-1})-f(x^k))+\gamma_k.
\end{equation}
By adding this to equation \eqref{standard}, we obtain
\begin{equation*}
  \begin{split}
    \|x^{k+1}-x^*\|^2+\|x^{k+1}-x^k\|^2\leq&\|x^k-x^*\|^2-2t_k(f(x^k)-f_*)+t_k^2\|\nabla f(x^k)\|^2\\
    &+\|x^k-x^{k-1}\|^2-\|x^{k+1}-x^k\|^2+3t_k\theta_k(f(x^{k-1})-f(x^k))+\gamma_k
  \end{split}
\end{equation*}
Rearranging this by using equation \eqref{gd}, the following holds;
\begin{equation*}
  \begin{split}
    \|x^{k+1}-x^*\|^2+&\|x^{k+1}-x^k\|^2+t_k(2+3\theta_k)(f(x^k)-f_*)\\
    &\leq\|x^k-x^*\|^2+\|x^k-x^{k-1}\|^2+3t_k\theta_k(f(x^{k-1})-f_*)+\gamma_k.
  \end{split}
\end{equation*}
\end{proof}

Then, we show that the sequence $\{x^k\}$ is contained within a closed ball centered on a solution $x^*$ by taking the telescoping sum of \eqref{neq4}.
\begin{lemma}\label{wadgdbounds}
  If the stepsize $t_k$ satisfies \eqref{wadgdstep}, the sequence of iterations $\{x^k\}$ is bounded.
  Furthermore, for a solution $x^*$, $x^k\in B(x^*,R)$ holds, where $R$ is a constant determined by $x^*$, the initial values, and $\{\gamma_k\}$.
\end{lemma}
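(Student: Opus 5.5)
We will mirror the proof of Lemma \ref{swadgdbounds}, now using the one-step inequality \eqref{neq4} in place of \eqref{neq3}. Summing \eqref{neq4} from $i=1$ to $k$ telescopes the terms $\|x^i-x^*\|^2+\|x^i-x^{i-1}\|^2$ and gives
\begin{align*}
  \|x^{k+1}-x^*\|^2&+\|x^{k+1}-x^k\|^2+\sum_{i=1}^{k}c_i(f(x^i)-f_*)\\
  &\leq\|x^1-x^*\|^2+\|x^1-x^0\|^2+3t_1\theta_1(f(x^0)-f_*)+\sum_{i=1}^k\gamma_i .
\end{align*}
Here $c_k=t_k(2+3\theta_k)$ and $c_i=t_i(2+3\theta_i)-3t_{i+1}\theta_{i+1}$ for $1\leq i\leq k-1$.

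The key step is showing $c_i\geq0$. From \eqref{wadgdstep2} and \eqref{wadgdstep} we have $t_{i+1}\leq\sqrt{\tfrac23+\theta_i}\,t_i$. Squaring and dividing by $t_i$ gives $t_{i+1}\theta_{i+1}\leq(\tfrac23+\theta_i)t_i$. Multiplying by $3$ yields $3t_{i+1}\theta_{i+1}\leq(2+3\theta_i)t_i$, which is exactly $c_i\geq0$. The last coefficient $c_k$ is trivially nonnegative. Since $f(x^i)-f_*\geq0$, the whole sum on the left can then be dropped.

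Next, the initial terms on the right are bounded by quantities depending only on the initial data, as in Lemma \ref{swadgdbounds}. Using $x^1-x^0=-t_0\nabla f(x^0)$ and \eqref{standard} with $k=0$,
\[
\|x^1-x^*\|^2+\|x^1-x^0\|^2\leq\|x^0-x^*\|^2-2t_0(f(x^0)-f_*)+2t_0^2\|\nabla f(x^0)\|^2.
\]
The same step-ratio bound at $i=0$ gives $3t_1\theta_1\leq(2+3\theta_0)t_0$. Therefore
\[
-2t_0(f(x^0)-f_*)+3t_1\theta_1(f(x^0)-f_*)\leq3t_0\theta_0(f(x^0)-f_*).
\]
Finally, bound $\sum_{i=1}^k\gamma_i\leq S:=\sum_{i=1}^\infty\gamma_i<\infty$. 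Set
\[
R^2:=\|x^0-x^*\|^2+3t_0\theta_0(f(x^0)-f_*)+2t_0^2\|\nabla f(x^0)\|^2+S,
\]
enlarged to be positive if necessary. This gives $\|x^{k+1}-x^*\|^2\leq R^2$ for all $k$, together with $x^0\in B(x^*,R)$. Hence $\{x^k\}\subset B(x^*,R)$ is bounded, and $R$ depends only on $x^*$, the initial values and $\{\gamma_k\}$.

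The main obstacle is the nonnegativity of the $c_i$. It requires matching the constant $3$ in \eqref{neq4} against the factor $\tfrac23$ in the first branch of $u_k^{(2)}$, and checking that it holds regardless of which branch of the min, or which point of $[l_k^{(2)},u_k^{(2)}]$, is chosen. It does, since only $t_k\leq u_k^{(2)}$ is used. I would also note that the preceding lemma was proved under \eqref{wadgdstep}, despite the typo referencing \eqref{swadgdstep}. Its use here is legitimate because its proof only used $t_k\leq u_k^{(2)}$.
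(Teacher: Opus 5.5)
Your proposal is correct and follows the paper's proof essentially step for step. You telescope \eqref{neq4}, derive $c_i\geq0$ from $3t_{i+1}\theta_{i+1}\leq(2+3\theta_i)t_i$, bound the initial terms via \eqref{standard} at $k=0$ together with the same step-ratio bound, and arrive at the identical $R^2$; along the way you also correctly drop the stray factor $2$ in front of $\sum c_i$ in the paper's telescoped inequality and flag the mislabeled hypothesis of the preceding lemma.
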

\begin{proof}
Taking the telescoping sum of \eqref{neq4}, we get
\begin{align}
  \|x^{k+1}-x^*\|^2&+\|x^{k+1}-x^k\|^2+2\sum_{i=1}^{k}c_i(f(x^i)-f_*)\nonumber\\
  &\leq\|x^1-x^*\|^2+\|x^1-x^0\|^2+3t_1\theta_1(f(x^0)-f_*)+\sum_{i=1}^k\gamma_i\label{wadgdneq2},
\end{align}
where $\{c_i\}$ is a sequence of real numbers defined as follows;
\begin{equation}\label{c}
c_i=
\begin{cases}
  t_i(2+3\theta_i)                      & \text{if $i=k$,} \\
  t_i(2+3\theta_i)-3t_{i+1}\theta_{i+1} & \text{if $1\leq i\leq k-1$.} 
\end{cases}
\end{equation}
Also, from \eqref{wadgdstep2} and \eqref{wadgdstep}, we have $t_k\leq\sqrt{\frac{2}{3}+\theta_{k-1}}t_{k-1}$.
Squaring both sides yields
\begin{equation*}
t_k^2\leq\left(\frac{2}{3}+\theta_{k-1}\right)t_{k-1}^2,
\end{equation*}
and dividing this by $t_{k-1}$ gives
\begin{equation}\label{wadgdstep3}
  t_k\theta_k\leq\left(\frac{2}{3}+\theta_{k-1}\right)t_{k-1}.
\end{equation}

Here, from \eqref{gd}, \eqref{standard}, and \eqref{wadgdstep3}, we have
\begin{align*}
    \|x^1-x^*\|^2&+\|x^1-x^0\|^2+3t_1\theta_1(f(x^0)-f_*)\\
    &\leq\|x^0-x^*\|^2+(3t_1\theta_1-2t_0)(f(x^0)-f_*)+2t_0^2\|\nabla f(x^0)\|^2\\
    &\leq\|x^0-x^*\|^2+3t_0\theta_0(f(x^0)-f_*)+2t_0^2\|\nabla f(x^0)\|^2.
\end{align*}
Furthermore, since $\sum_{k=1}^{\infty}\gamma_k$ is bounded by assumption, using a certain real number $S>0$, the following inequality holds for arbitrary $k\in\mathbb{N}$;
\begin{equation*}
  \sum_{i=1}^k\gamma_i\leq S.
\end{equation*}
Therefore, from \eqref{wadgdneq2},
\begin{equation*}
  \begin{split}
    \|x^{k+1}-x^*\|^2+&\|x^{k+1}-x^k\|^2+\sum_{i=1}^{k-1}c_i(f(x^i)-f_*)\\
    &\leq\|x^0-x^*\|^2+3t_0\theta_0(f(x^0)-f_*)+2t_0^2\|\nabla f(x^0)\|^2+S\\
    &=R^2
  \end{split}
\end{equation*}
holds (where we assume $R>0$).

Here,
\begin{itemize}
\item When $i=k$, $c_k=t_k(2+3\theta_k)\geq0$ since $t_k\geq0$ and $(2+3\theta_k)>0$.
\item When $1\leq i\leq k-1$, $c_i=t_i(2+3\theta_i)-3t_{i+1}\theta_{i+1}\geq0$ from equation \eqref{wadgdstep3}.
\end{itemize}
Thus, $c_i\geq0$ for $1\leq i\leq k$. Furthermore, since $f_*$ is the optimal value, $f(x^i)-f_*\geq0$ holds.

Therefore, the following inequality holds;
\begin{equation*}
  \|x^{k+1}-x^*\|^2\leq R^2.
\end{equation*}
Since this holds for any $k$, $\{x^k\}$ is bounded and $x^k\in B(x^*,R)$ holds.
\end{proof}

From this lemma, even if $f$ is locally smooth, by restricting our discussion within this closed ball, we can definitively treat it as smooth with an existing Lipschitz constant $L$.

As described in Section 3.1, the proof of convergence also requires that the stepsize is bounded below by a strictly positive constant, which can be shown using the Lipschitz constant $L$.
The next lemma demonstrates this fact.
\begin{lemma}\label{positive2}
If $t_k$ satisfies \eqref{wadgdstep}, it satisfies the following inequality;
\begin{equation}\label{minstep-2}
  t_k\geq\min\left\{t_0,\frac{\delta}{\sqrt{3}L}\right\}
\end{equation}
where $L$ is the Lipschitz constant of $\nabla f(x)$ within the closed ball $B(x^*,R)$ from Lemma \ref{wadgdbounds}.
\end{lemma}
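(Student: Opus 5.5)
Mirroring Lemma \ref{positive}, I would argue by induction on $k$, splitting into cases according to which term attains the minimum in the lower bound $l_k^{(2)}$ of \eqref{wadgdstep1}. Throughout I would use Lemma \ref{wadgdbounds}: all iterates lie in $B(x^*,R)$, so $L_k\le L$ for every $k$. Consequently $\frac{\delta}{\sqrt{2}L_k}\ge\frac{\delta}{\sqrt{2}L}\ge\frac{\delta}{\sqrt{3}L}$.

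\textbf{Case 1:} $l_k^{(2)}=\sqrt{\frac23+\theta_{k-1}}\,t_{k-1}$. I would first check that the second term of $u_k^{(2)}$ in \eqref{wadgdstep2} is at least $\frac{\delta}{\sqrt2 L_k}$. Indeed,
\[
\sqrt{\frac{1+\gamma_k/\|x^k-x^{k-1}\|^2}{[2t_{k-1}^2L_k^2-1]_+}}\,t_{k-1}\ \ge\ \frac{t_{k-1}}{\sqrt{[2t_{k-1}^2L_k^2-1]_+}}\ >\ \frac{1}{\sqrt2 L_k},
\]
which is the comparison already made after \eqref{adgdstep2}. Hence $u_k^{(2)}=l_k^{(2)}$ and $t_k=\sqrt{\frac23+\theta_{k-1}}\,t_{k-1}$.

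\textbf{Case 2:} $l_k^{(2)}=\frac{\delta}{\sqrt2 L_k}$. Then $t_k\ge\frac{\delta}{\sqrt2 L}\ge\frac{\delta}{\sqrt3 L}$ directly.

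The main obstacle is Case 1. Here the growth factor $\sqrt{\frac23+\theta_{k-1}}$ may be smaller than $1$, so $t_k\ge t_{k-1}$ no longer follows as it did for AdGD-1. This is presumably why the constant in \eqref{minstep-2} is $\sqrt3$ rather than $\sqrt2$. My plan is to strengthen the induction hypothesis so that it controls $\theta_{k-1}$ as well as $t_{k-1}$. Specifically, I would show that after every Case 2 step either $\theta$ is at least $\frac13$, or the step is already well above the threshold. In the first situation $t_k\ge t_{k-1}$ holds again, since $\frac23+\theta_{k-1}\ge1$.

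A chain of consecutive Case 1 steps satisfies $\theta_k=\sqrt{\frac23+\theta_{k-1}}$. The map $\theta\mapsto\sqrt{\frac23+\theta}$ has fixed point $\theta^\star=\frac{1+\sqrt{11/3}}{2}>1$, so along such a chain $\theta$ increases monotonically once it exceeds $\frac13$. Moreover $\theta\ge\frac13$ is preserved, because $\sqrt{\frac23+\frac13}=1\ge\frac13$. The remaining worry is the first Case 1 step following a Case 2 step with small $\theta$, which multiplies $t$ by at least $\sqrt{2/3}$. I would bound this loss by
\[
\sqrt{\tfrac23}\cdot\frac{\delta}{\sqrt2 L}=\frac{\delta}{\sqrt3 L}.
\]
This gives exactly the stated constant. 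After this first step $\theta_k=\sqrt{\frac23+\theta_{k-1}}\ge\sqrt{2/3}>\frac13$, so subsequent Case 1 steps are nondecreasing.

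The base case $k=1$ uses $\theta_0>0$. I would track it the same way: $t_1\ge\sqrt{2/3}\,t_0$, and the bound $\min\{t_0,\cdot\}$ must be handled with the same bookkeeping. If this degrades the constant, I would adjust the argument to use the hypothesis $\theta_0>0$ and the sequence of $\theta$ values.

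The precise formulation I would induct on is therefore: $t_k\ge\min\{t_0,\frac{\delta}{\sqrt3L}\}$, and, whenever $\theta_k<\frac13$, in fact $t_k\ge\min\{t_0,\frac{\delta}{\sqrt2L}\}$. This is verified through the two cases above.
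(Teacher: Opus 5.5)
Your inductive step is essentially the paper's. In Case~1 the paper splits on whether step $k-1$ was itself Case~1, in which case $\theta_{k-1}=\sqrt{\frac{2}{3}+\theta_{k-2}}\ge\sqrt{\frac{2}{3}}$ and so $t_k\ge t_{k-1}$. Otherwise step $k-1$ was Case~2, so $t_{k-1}\ge\frac{\delta}{\sqrt{2}L}$ and $t_k\ge\sqrt{\frac{2}{3}}\,t_{k-1}\ge\frac{\delta}{\sqrt{3}L}$. Your threshold $\theta_{k-1}\ge\frac{1}{3}$ encodes the same split.

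The genuine gap is the base case. You leave it open, and the repair you suggest, using only $\theta_0>0$, cannot work, because with $\theta_0>0$ alone the inequality fails at $k=1$. Suppose $\theta_0<\frac{1}{3}$ and $t_0<\frac{\delta}{\sqrt{3}L}$. Then $\sqrt{\frac{2}{3}+\theta_0}\,t_0<t_0<\frac{\delta}{\sqrt{2}L}\le\frac{\delta}{\sqrt{2}L_1}$, so step $1$ is in Case~1 and $t_1=\sqrt{\frac{2}{3}+\theta_0}\,t_0<t_0=\min\{t_0,\frac{\delta}{\sqrt{3}L}\}$. A concrete instance is $f(x)=\frac{1}{2}x^2$ on $\mathbb{R}$ with $x^0\neq0$, where $L=L_1=1$. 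The missing ingredient is the AdGD-2 initialization $\theta_0=\frac{1}{3}$, which the paper invokes explicitly. It makes the Case~1 factor at $k=1$ equal to $\sqrt{\frac{2}{3}+\frac{1}{3}}=1$, so $t_1=t_0$; any $\theta_0\ge\frac{1}{3}$ works equally well. If you want to keep general $\theta_0>0$, you must weaken the conclusion to $t_k\ge\min\{\sqrt{\frac{2}{3}+\theta_0}\,t_0,\frac{\delta}{\sqrt{3}L}\}$, which still suffices for the convergence theorem.

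The same defect appears in your ``precise formulation''. The auxiliary claim ``$\theta_k<\frac{1}{3}\Rightarrow t_k\ge\min\{t_0,\frac{\delta}{\sqrt{2}L}\}$'' does not close the induction. Plugged into Case~1 at step $k+1$, it only gives $t_{k+1}\ge\min\{\sqrt{\frac{2}{3}}\,t_0,\frac{\delta}{\sqrt{3}L}\}$, which falls short of the target whenever $t_0<\frac{\delta}{\sqrt{2}L}$. What you need, and what your informal narrative actually uses, is this: for $k\ge1$, the condition $\theta_k<\frac{1}{3}$ forces step $k$ to be Case~2, since a Case~1 step gives $\theta_k=\sqrt{\frac{2}{3}+\theta_{k-1}}\ge\sqrt{\frac{2}{3}}$. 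Hence $t_k\ge\frac{\delta}{\sqrt{2}L}$, with no $t_0$ alternative. The $t_0$ alternative can only enter at $k=0$, which is exactly the base case and is settled by $\theta_0=\frac{1}{3}$.
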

\begin{proof}
  First, depending on the value of $l_k^{(2)}$, the possible interval for $t_k$ can be categorized into the following cases:
  \begin{description}
    \item[(Case 1)] When $l_k^{(2)}=\sqrt{\frac{2}{3}+\theta_{k-1}}t_{k-1}$, we have $\sqrt{\frac{2}{3}+\theta_{k-1}}t_{k-1}\leq\frac{\delta}{\sqrt{2}L_k}$ from \eqref{wadgdstep1}.
    Here, since 
    \[
    \frac{\delta}{\sqrt{2}L_k}<\frac{1}{\sqrt{2}L_k}<\frac{t_{k-1}}{\sqrt{[2t_{k-1}^2L_k^2-1]_+}}\leq\sqrt{\frac{1+\frac{\gamma_k}{\|x^{k}-x^{k-1}\|^2}}{[2t_{k-1}^2L_k^2-1]_+}}t_{k-1}
    \]
    holds, we consequently have $u_k^{(2)}=\sqrt{\frac{2}{3}+\theta_{k-1}}t_{k-1}$.
    That is, since $l_k^{(2)}=u_k^{(2)}$, we obtain
    \begin{equation}\label{condition2-1}
    t_k=\sqrt{\frac{2}{3}+\theta_{k-1}}t_{k-1}.
    \end{equation}
    \item[(Case 2)] When $l_k^{(2)}=\frac{\delta}{\sqrt{2}L_k}$, we have $\frac{\delta}{\sqrt{2}L_k}\leq\sqrt{\frac{2}{3}+\theta_{k-1}}t_{k-1}$ from \eqref{wadgdstep1}.
    Since 
    \[
    \frac{\delta}{\sqrt{2}L_k}<\sqrt{\frac{1+\frac{\gamma_k}{\|x^{k}-x^{k-1}\|^2}}{[2t_{k-1}^2L_k^2-1]_+}}t_{k-1}
    \]
    also holds, we consequently obtain
    \begin{equation}\label{condition2-2}
      t_k\in\left[\frac{\delta}{\sqrt{2}L_k},u_k^{(2)}\right]
    \end{equation}
  \end{description}

  We prove the proposition using mathematical induction.
  \begin{itemize}
    \item When $k=1$, if $t_1$ falls into (Case 1), $t_1=\sqrt{\frac{2}{3}+\theta_0}t_0=t_0$ from \eqref{condition2-1} ($\because\theta_0=\frac{1}{3}$) . If $t_1$ falls into (Case 2), $t_1\geq\frac{\delta}{\sqrt{2}L_k}\geq\frac{\delta}{\sqrt{2}L}\geq\frac{\delta}{\sqrt{3}L}$ from \eqref{condition2-2}. Therefore, \eqref{minstep-2} holds in both cases.
    \item Assume that $t_{k-1}$ satisfies \eqref{minstep-2}. First, when $t_k$ satisfies (Case 2), it follows from \eqref{condition2-2} that $t_k \geq \frac{\delta}{\sqrt{2}L_k} \geq \frac{\delta}{\sqrt{3}L}$, which implies that \eqref{minstep-2} holds.

    On the other hand, when $t_k$ satisfies (Case 1), \eqref{condition2-1} yields $t_k = \sqrt{\frac{2}{3}+\theta_{k-1}}t_{k-1}$.
    Here, if $t_{k-1}$ also satisfies (Case 1), we have $t_{k-1} = \sqrt{\frac{2}{3}+\theta_{k-2}}t_{k-2}$ from \eqref{condition2-1}. Dividing both sides by $t_{k-2}$ yields $\theta_{k-1} = \sqrt{\frac{2}{3}+\theta_{k-2}} \geq \sqrt{\frac{2}{3}}$.
    Consequently, the following inequality holds:
    \[
    t_k \geq \sqrt{\frac{2}{3}+\sqrt{\frac{2}{3}}}t_{k-1} \geq t_{k-1}.
    \]
    In this case, by the inductive hypothesis, $t_k$ also satisfies \eqref{minstep-2}.
    Furthermore, if $t_{k-1}$ satisfies (Case 2), we obtain $t_{k-1} \geq \frac{\delta}{\sqrt{2}L}$ from \eqref{condition2-2}.
    Thus,
    \[
    t_k \geq \sqrt{\frac{2}{3}} \cdot \frac{\delta}{\sqrt{2}L} \geq \frac{\delta}{\sqrt{3}L},
    \]
    which implies that $t_k$ satisfies \eqref{minstep-2}.
  \end{itemize}
  Consequently, \eqref{minstep-1} holds for any $k\in\mathbb{N}$.
\end{proof}

From these lemmas, the following theorem regarding convergence to a solution holds.
\begin{theorem}\label{swadgd}
Let $f\colon\mathbb{R}^d\rightarrow\mathbb{R}$ be convex with locally Lipschitz gradient $\nabla f$.
If the stepsize $t_k$ satisfies \eqref{wadgdstep}, the sequence $\{x^k\}$ is bounded, and its accumulation point is an optimal solution, where the initial values are $t_0>0$, $\theta_0\geq0$, and $x^0\in\mathbb{R}^d$.
\end{theorem}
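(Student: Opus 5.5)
The plan is to mirror the proof of Theorem~\ref{swadgd} for AdGD-1, using the appendix lemmas for AdGD-2 in place of Lemmas~\ref{neq1}, \ref{swadgdbounds} and \ref{positive}.

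\emph{Boundedness.} Lemma~\ref{wadgdbounds} shows that $\{x^k\}$ is bounded and lies in $B(x^*,R)$. Hence an accumulation point exists. On the compact convex set $B(x^*,R)$, the gradient $\nabla f$ is $L$-Lipschitz for some $L>0$, because $\nabla f$ is locally Lipschitz.

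\emph{Sharpened descent inequality.} Next I invoke Lemma~\ref{lipschitz2} with $x=x^*$, $y=x^k$, using $\nabla f(x^*)=0$. This gives $f(x^*)-f(x^k)\ge\langle\nabla f(x^k),x^*-x^k\rangle+\frac{1}{2L}\|\nabla f(x^k)\|^2$, so \eqref{standard} improves to \eqref{standard2}. I then add \eqref{standard2} to the analogue of \eqref{neq5} for AdGD-2, namely
\[
\|x^{k+1}-x^k\|^2\le\|x^k-x^{k-1}\|^2-\|x^{k+1}-x^k\|^2+3t_k\theta_k(f(x^{k-1})-f(x^k))+\gamma_k .
\]
The result is inequality \eqref{neq4} with the extra nonnegative term $\frac{t_k}{L}\|\nabla f(x^k)\|^2$ on the left. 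Telescoping exactly as in Lemma~\ref{wadgdbounds} keeps the coefficients $c_i\ge0$ by \eqref{wadgdstep3}, and the initial terms and $\sum\gamma_i$ are bounded by the same $R^2$. I expect $\sum_{i=1}^k\frac{t_i}{L}\|\nabla f(x^i)\|^2\le R^2$ for all $k$, hence $\sum_{i}t_i\|\nabla f(x^i)\|^2<\infty$ and $t_i\|\nabla f(x^i)\|^2\to0$.

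\emph{Conclusion.} Lemma~\ref{positive2} gives $t_i\ge\min\{t_0,\delta/(\sqrt3L)\}>0$, so $\|\nabla f(x^i)\|\to0$. For any accumulation point $\bar x=\lim_j x^{i_j}$, continuity of $\nabla f$ gives $\nabla f(\bar x)=0$. By convexity (Lemma~\ref{convex1}), $\bar x$ is then a minimizer.

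\emph{Main obstacle.} The expected difficulty lies in checking the combination step. After adding, the coefficient of $f(x^k)-f_*$ must come out as $t_k(2+3\theta_k)$ and the $t_k^2\|\nabla f(x^k)\|^2$ terms must cancel against $\|x^{k+1}-x^k\|^2$ via \eqref{gd}. The leftover term $-\frac{t_k}{L}\|\nabla f(x^k)\|^2$ has to survive with the correct sign.

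A second point needs care. Lemma~\ref{positive2} was argued using $\theta_0=\frac13$ in the base case, whereas the statement allows $\theta_0\ge0$. For general $\theta_0$, I would handle this by absorbing the first finitely many steps into the constant. Alternatively, I would replace the lower bound by $\min\{t_0\sqrt{2/3},\,\delta/(\sqrt3L)\}$; this still gives a positive lower bound, which is all the argument requires.
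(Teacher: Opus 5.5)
Your proposal follows the paper's proof essentially step for step: boundedness from Lemma~\ref{wadgdbounds}, the sharpened inequality \eqref{standard2} added to the AdGD-2 analogue of \eqref{neq5}, telescoping to obtain $\sum_i \frac{t_i}{L}\|\nabla f(x^i)\|^2\le R^2$, and then the positive lower bound from Lemma~\ref{positive2} to conclude $\nabla f(x^i)\to 0$. Your remark on $\theta_0$ is also correct. The paper's base case silently uses $\theta_0=\frac13$, and for general $\theta_0\ge 0$ the same induction goes through with your bound $\min\{\sqrt{2/3}\,t_0,\ \delta/(\sqrt{3}L)\}$.
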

\begin{proof}
  From Lemma \ref{wadgdbounds}, $\{x^k\}$ is bounded.
  Thus, an accumulation point exists.

  Here, since $B(x^*,R)$ is a convex set and $f$ is locally smooth, the mapping $f(x)$ with domain $x\in B(x^*,R)$ is L-smooth for a certain constant $L>0$.
  Furthermore, since $x^*,x^k\in B(x^*,R)$ from Lemma \ref{wadgdbounds}, Lemma \ref{lipschitz2} in Appendix holds, so \eqref{standard2} holds.
  Adding this to equation \eqref{neq5} yields
  \begin{equation*}
    \begin{split}
      \|x^{k+1}-x^*\|^2+&\|x^{k+1}-x^k\|^2+t_k(2+3\theta_k)(f(x^k)-f_*)+\frac{t_k}{L}\|\nabla f(x^k)\|^2\\
      &\leq\|x^k-x^*\|^2+\|x^k-x^{k-1}\|^2+3t_k\theta_k(f(x^{k-1})-f_*)+\gamma_k.
    \end{split}
  \end{equation*}

  Taking the telescoping sum in the same manner as Lemma \ref{wadgdbounds}, the following inequality holds;
  \begin{equation*}
    \|x^{k+1}-x^*\|^2+\|x^{k+1}-x^k\|^2+\sum_{i=1}^{k}c_i(f(x^i)-f_*)+\sum_{i=1}^k\frac{t_i}{L}\|\nabla f(x^i)\|^2\leq R^2
  \end{equation*}
  where $c_i$ is the same as in \eqref{c}.
  At this time, since $c_i\geq0$ and $f(x^i)-f_*\geq0$ similarly to Lemma \ref{wadgdbounds},
  \begin{equation*}
    \sum_{i=1}^k\frac{t_i}{L}\|\nabla f(x^i)\|^2\leq R^2
  \end{equation*}
  holds.
  Therefore, taking the limit as $k\to\infty$, the following inequality holds;
  \begin{equation*}
    \sum_{i=1}^{\infty}\frac{t_i}{L}\|\nabla f(x^i)\|^2\leq R^2.
  \end{equation*}
  From this, $\lim_{i\to\infty}t_i\|\nabla f(x^i)\|^2=0$ holds.
  At this time, since $t_i\geq\min\left\{t_0,\frac{\delta}{\sqrt{3}L}\right\}>0$ from Lemma \ref{positive2},
  \begin{equation*}
    \lim_{i\to\infty}\|\nabla f(x^i)\|=0
  \end{equation*}
  holds.
  
  For any accumulation point $\lim_{j\to\infty}x^{i_j}=\bar{x}$, we have $\lim_{j\to\infty}\|\nabla f(x^{i_j})\|=\|\nabla f(\bar{x})\|=0$, meaning that $\bar{x}$ is a solution.
\end{proof}

\end{document}